\newcommand{\rset}{\mathbb{R}}
\providecommand{\norm}[1]{\lVert#1\rVert}
\begin{document}

\title{Linear convergence  of first order methods for
non-strongly convex  optimization}

\titlerunning{Linear convergence  of first order methods
 for non-strongly convex  optimization}

\author{I. Necoara  \and Yu. Nesterov \and F. Glineur}

\institute{I. Necoara  \at Automatic Control and Systems
Engineering Department, University Politehnica Bucharest, 060042
Bucharest, Romania, \email{ion.necoara@acse.pub.ro.}
\and
Yu. Nesterov, F. Glineur \at Center for Operations Research
and Econometrics, Universite catholique de Louvain,
Louvain-la-Neuve, B-1348, Belgium, \email{\{yurii.nesterov,
francois.glineur\}@uclouvain.be.}
}

\date{Received: date / Accepted: date}

\maketitle

\begin{abstract}
The standard assumption for proving linear convergence of first
order methods for smooth convex optimization is the strong convexity
of the objective function, an assumption which does not hold for
many practical applications. In this paper, we derive linear
convergence rates of several first order methods for solving
smooth non-strongly convex constrained optimization problems, i.e.
involving an objective function with a Lipschitz continuous gradient
that satisfies some relaxed strong convexity condition. In
particular, in the case of  smooth constrained convex optimization,
we provide several relaxations of the strong convexity conditions and
prove that they  are sufficient for getting linear convergence for
several first order methods such as projected gradient, fast
gradient and feasible descent methods. We also provide examples of
functional classes  that satisfy our proposed relaxations of strong
convexity conditions.  Finally, we show that the
proposed relaxed strong convexity conditions cover important
applications ranging from solving  linear systems, Linear
Programming, and dual formulations of linearly constrained convex
problems.
\end{abstract}


\section{Introduction}

\noindent Recently, there emerges a surge of interests in
accelerating first order methods for difficult optimization
problems,  for example  the ones without strong convex objective
function, arising in different applications such as data analysis
\cite{LevLew:10} or machine learning \cite{LiuWri:15}. Algorithms
based on gradient information have proved to be effective in these
settings,  such as  projected gradient and its fast variants
\cite{Nes:04}, stochastic gradient descent \cite{NemJud:09} or
coordinate gradient descent \cite{Wri:15}.


\noindent For smooth convex programming, i.e. optimization problems
with convex objective function having a Lipschitz continuous
gradient with constant $L_f>0$,  first order methods are converging
sublinearly.  In order to get an $\epsilon$-optimal solution, we
need to perform $\mathcal{O} \left( \frac{L_f}{\epsilon} \right)$ or
even $\mathcal{O} \left( \sqrt{\frac{L_f}{\epsilon}} \right)$ calls
to the oracle \cite{Nes:04}. Typically, for proving linear
convergence of the first order methods  we also need to require
strong convexity for the objective function. Unfortunately, many
practical applications do not have strong convex   objective
function. A new line of analysis, that circumvents these
difficulties, was developed using several notions. For example,
sharp minima type condition for non-strongly convex optimization
problems, i.e. the epigraph of the objective function is a
polyhedron, has been proposed in \cite{BurDen:09,LewPan:98,YanLin:15}. An
error bound property, that estimates the distance to the solution
set from any feasible point by the norm of the proximal residual,
has been analyzed in \cite{LuoTse:92,NecCli:14,WanLin:14}. Finally,
a restricted (also called essential) strong convexity inequality,
which basically imposes a quadratic lower bound on the objective
function, has been derived in \cite{ZhaYin:13,LiuWri:15}. For all
these conditions (sharp minima, error bound or restricted strong
convexity) several gradient-type methods are shown to converge
linearly, see e.g.
\cite{LuoTse:92,LiuWri:15,NecCli:14,WanLin:14,ZhaYin:13}. Several
other papers on linear convergence of first order methods for
non-strongly convex optimization  have appeared recently
\cite{BecSht:15,YanLin:15}. The main goal of this paper is to
develop a framework for finding general functional  conditions for
smooth convex constrained optimization problems that allow us to
prove linear convergence for a broad class of first order methods.

\vspace{2pt}

\noindent \textit{Contributions}: For smooth  convex constrained
optimization,  we show in this paper that some relaxations of the strong
convexity conditions of the objective function are sufficient for obtaining
linear convergence for several first order methods. The most general
relaxation we introduce is a quadratic functional growth condition,
which states that the objective function grows faster than the
squared distance between any feasible point and the optimal set. We
also propose other non-strongly convex conditions, which are more
conservative than the quadratic functional growth condition, and
establish relations  between them. Further, we provide examples of
functional classes  that satisfy our proposed relaxations of strong convexity conditions.
For all these smooth non-strongly
convex constrained optimization problems, we prove that the corresponding
relaxations are sufficient for getting linear convergence for
several  first order methods, such as projected gradient, fast
gradient and feasible descent methods. We also show that the
corresponding linear rates are improved in some cases compared to
the existing results. We also establish necessary and sufficient
conditions for linear convergence of the gradient method. Finally,
we show that the proposed relaxed strong convexity conditions cover
important applications ranging from solving linear systems, Linear
Programming, and dual formulations of linearly constrained convex
problems.

\vspace{2pt}

\noindent \textit{Notations:} We work in the space $\rset^n$
composed by column vectors and $\rset^n_+$ denotes
the non-negative orthant. For $u,v \in \rset^n$ we denote the Euclidean
 inner product $\langle u,v \rangle = u^T v$,
Euclidean norm $\left \| u \right \|=\sqrt{\langle u,u \rangle}$ and
the projection of $u$ onto  convex set $X$ as $\left[u
\right]_X=\arg \min_{x \in X} \|x-u\|$. For  matrix $A \in \rset^{m \times n}$, we denote $\sigma_\text{min}(A)$ the smallest nonzero singular value and $\|A\|$ spectral norm.


\section{Problem formulation}
In this paper we consider the  class of convex constrained optimization
problems:
\[ \textbf{(P)}: \quad  f^* = \min_{x \in X} f(x),  \]
where   $X \subseteq \rset^n$ is a simple closed convex set, that is the
projection onto this set is easy,  and $f:X \to \rset$ is a closed convex
function. We further denote by $X^* = \arg \min_{x \in X} f(x)$
the set of optimal solutions of problem (P). We assume throughout
the paper that the optimal set $X^*$  is nonempty and closed and
the optimal value $f^*$ is finite. Moreover, in this paper we assume that the  objective
function is smooth, that is $f$ has Lipschitz continuous gradient with constant
$L_f>0$ on the set $X$:
\begin{align}
\label{lipg} \| \nabla f(x) - \nabla f(y) \| \leq L_f \|x - y\|
\quad \forall x, y \in X.
\end{align}
An immediate consequence of \eqref{lipg} is the following inequality \cite{Nes:04}:
\begin{align}
\label{lipg2}  f(y)  \leq  f(x) + \langle \nabla f(x), y - x \rangle
+  \frac{L_f}{2} \|x - y\|^2 \quad \forall x, y \in X,
\end{align}
while, under convexity of $f$, we also have:
\begin{align}
\label{lipg3} 0 \leq  \langle \nabla f(x) - \nabla f(y), x - y \rangle  \leq  L_f \|x - y\|^2
\quad \forall x, y \in X.
\end{align}

\noindent It is well known that  first order methods are converging sublinearly
on the class of problems whose objective function $f$ has Lipschitz continuous
gradient with constant $L_f$ on the set  $X$, e.g.
convergence rates in terms of
function values of order \cite{Nes:04}:
\begin{equation}
\label{lin_conv_Lf}
\begin{aligned}
& f(x^k) - f^* \leq \frac{L_f \|x^0 - x^*\|^2}{2k} \quad
\text{for projected gradient},  \\
& f(x^k) - f^* \leq \frac{2 L_f \|x^0 - x^*\|^2}{(k+1)^2} \quad \text{for fast gradient},
\end{aligned}
\end{equation}
where $x^k$ is the $k$th iterate generated by the method. Typically,
in order to show linear convergence of first order  methods applied
for solving smooth convex problems, we need to require strong convexity of the objective function.  We recall that $f$ is strongly convex function  on the convex set $X$ with constant $\sigma_f>0$ if the following inequality holds \cite{Nes:04}:
\begin{align}
\label{sc} f(\alpha x + (1 - \alpha) y) \leq \alpha f(x) + (1 -
\alpha) f(y) - \frac{\sigma_f \alpha (1 - \alpha)}{2} \|x - y\|^2
\end{align}
for all $x,y \in X$ and  $\alpha \in [0, \; 1]$. Note that if
$\sigma_f =0$, then $f$ is simply a convex function. We denote by
$\mathcal{S}^{}_{L_f,\sigma_f} (X)$ the class of $\sigma_f$-strongly
convex functions  with an $L_f$-Lipschitz continuous  gradient on
$X$. First order methods are converging linearly on the class
of problems (P) whose objective function $f$ is in
$\mathcal{S}^{}_{L_f,\sigma_f} (X)$, e.g. convergence rates of order
\cite{Nes:04}:
\begin{equation}
\label{lin_conv_Lfsf}
\begin{aligned}
& f(x^k) - f^* \leq \frac{L_f \|x^0 - x^*\|^2}{2} \left( 1 - \frac{\sigma_f}{L_f}\right)^k
\quad \text{for projected gradient},  \\
& f(x^k) - f^* \leq 2\left( f(x^0) - f^* \right) \left( 1 - \sqrt{\frac{\sigma_f}{L_f}}\right)^k \quad \text{for fast gradient}.
\end{aligned}
\end{equation}

\noindent In the case of a differentiable function $f$ with $L_f$-Lipschitz continuous  gradient, each of the following conditions below is equivalent to inclusion $f \in
\mathcal{S}^{}_{L_f,\sigma_f} (X)$~\cite{Nes:04}:
\begin{equation}
\label{sc_equiv}
\begin{aligned}
& f(y) \geq f(x) + \langle \nabla f(x), y-x \rangle +  \frac{\sigma_f}{2} \|x - y \|^2 \qquad \forall x,y \in X,  \\
& \sigma_f \| x -  y\|^2 \leq \langle \nabla f(x) - \nabla f(y), x-y  \rangle  \qquad \forall x, y \in X.
\end{aligned}
\end{equation}
Let us give some properties of smooth  strongly convex functions from the class $\mathcal{S}^{}_{L_f,\sigma_f} (X)$.
Firstly, using the optimality conditions for (P), that is $\langle \nabla f(x^*), y-x^* \rangle \geq 0$ for all $y \in X$ and $x^* \in X^*$,  in the first
inequality in \eqref{sc_equiv} we  get the following relation:
\begin{equation}
\label{sc1}
\begin{aligned}
& f(x) - f^* \geq \frac{\sigma_f}{2} \|x - x^* \|^2 \qquad \forall x
\in X.
\end{aligned}
\end{equation}
Further, the gradient mapping  of a continuous differentiable
function $f$ with Lipschitz gradient in a point $x \in \rset^n$ is
defined as \cite{Nes:04}:
\[ g(x) = L_f \left( x - [x - 1/L_f \nabla f(x)]_X \right), \]
If additionally, the function $f$ has also Lipschitz continuous gradient, then
we obtain a second relation valid for any $f \in
\mathcal{S}^{}_{L_f,\sigma_f} (X)$ \cite{WanLin:14}[Lemma 22]:
\begin{equation}
\label{sc12}
\begin{aligned}
& \frac{\sigma_f}{2} \|x - y \| \leq  \|g(x) - g(y) \| \qquad
\forall x,y \in X.
\end{aligned}
\end{equation}

\noindent However, in many applications the  strong convexity condition
\eqref{sc} or equivalently  one of the conditions \eqref{sc_equiv} cannot be assumed to hold. Therefore, in the next sections we introduce some
non-strongly convex conditions for the objective function $f$ that are
less conservative than strong convexity. These are based on relaxations of
 strong convexity relations \eqref{sc_equiv}--\eqref{sc12}.


\section{Non-strongly convex conditions for a function}
In this section we  introduce   several  functional classes  that
are relaxing the strong convexity properties
\eqref{sc_equiv}--\eqref{sc12} of a function and derive relations
between these classes. More precisely, we observe that strong
convexity relations \eqref{sc_equiv} or \eqref{sc12} are valid for
all $x,y \in X$. We propose in this paper functional classes
satisfying conditions of the form \eqref{sc_equiv} or \eqref{sc12}
that hold for some particular choices of $x$  and $y$, or satisfying
simply the condition \eqref{sc1}.

\subsection{Quasi-strong convexity}
The first non-strongly convex  relaxation we introduce is based on
choosing a particular value for $y$ in the first strong convexity
inequality in \eqref{sc_equiv}, that is  $y = \bar{x}  \equiv
[x]_{X^*}$ (recall that $[x]_{X^*}$ denotes the projection of $x$
onto the optimal set $X^*$ of convex problem (P)):

\begin{definition}
Continuously differentiable  function $f$  is called
\textit{quasi-strongly convex}  on set $X$ if there exists a
constant $\kappa_f >0$ such that for any $x \in X$ and  $\bar{x} =
[x]_{X^*}$   we have:
 \begin{align}
  \label{wsc_basic}
  f^* \geq f(x) + \langle \nabla f(x), \bar{x} - x \rangle + \frac{\kappa_f}{2}
  \|  x - \bar{x} \|^2 \quad \forall x \in X.
 \end{align}
\end{definition}

\noindent   Note that inequality \eqref{wsc_basic} alone does not
even imply  convexity of  function $f$.  Moreover, our definition of
 quasi-strongly convex functions does not ensure uniqueness of the
optimal solution of problem (P) and does not require $f$ to have Lipschitz continuous gradient.
We denote the class of convex functions   with Lipschitz
continuous gradient with constant $L_f$ in \eqref{lipg} and
satisfying the quasi-strong convexity property with constant
$\kappa_f$ in \eqref{wsc_basic} by
$q\mathcal{S}_{L_f,\kappa_f}^{}(X)$. Clearly, for strongly convex
functions with constant $\kappa_f$,   from the first condition in \eqref{sc_equiv} with $y =
x^* \in X^*$, we observe that the following inclusion hold:
\begin{align}
\label{inclusion_sc} \mathcal{S}^{}_{L_f,\kappa_f} (X) \subseteq
q\mathcal{S}_{L_f,\kappa_f}^{}(X).
\end{align}
Moreover, combining the inequalities  \eqref{lipg2} and
\eqref{wsc_basic}, we obtain that  the condition number of  objective function $f \in
q\mathcal{S}_{L_f,\kappa_f}^{}(X)$, defined as $\mu_f= \kappa_f/L_f$,
satisfies:
\begin{align}
\label{cond_P}
0 < \mu_f  \leq 1.
\end{align}

\noindent We will derive  below other functional classes  that are
related to our newly introduced class of quasi-strongly convex
functions $q\mathcal{S}_{L_f,\kappa_f}^{}(X)$.


\subsection{Quadratic under-approximation}
Let us  define the class of functions   satisfying a quadratic
under-approximation  on the set $X$, obtained from relaxing the
first inequality in \eqref{sc_equiv} by choosing $y=x$ and $x =
\bar{x} \equiv[x]_{X^*}$:

\begin{definition}
Continuously differentiable  function $f$  has a \textit{quadratic
under-approximation}   on $X$ if there exists a constant $\kappa_f
>0$ such that for any $x \in X$ and  $\bar{x} = [x]_{X^*}$  we have:
\begin{align}
\label{qap}
f(x) \geq  f^* +  \langle \nabla f(\bar{x}), x - \bar{x} \rangle  +
  \frac{\kappa_f}{2} \|x - \bar{x} \|^2 \quad \forall x \in X.
\end{align}
\end{definition}

\noindent We denote the class of convex  functions with Lipschitz
continuous gradient and satisfying the quadratic under-approximation
property \eqref{qap} on $X$ by $\mathcal{U}_{L_f,\kappa_f}^{}(X)$.
Then, we have the following inclusion:

\begin{theorem}
\label{th_qap=wsc} Inequality \eqref{wsc_basic} implies inequality \eqref{qap}. Therefore, the following inclusion holds:
\begin{align}
\label{inclusion_qap} q\mathcal{S}_{L_f,\kappa_f}^{}(X)  \subseteq
\mathcal{U}_{L_f,\kappa_f}(X).
\end{align}
\end{theorem}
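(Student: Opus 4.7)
The plan is to exploit the fact that quasi-strong convexity only gives information about pairs $(x,[x]_{X^*})$, so to bring both points appearing in \eqref{qap} into the picture I would slide $x$ toward its projection along the segment and integrate. Concretely, fix $x \in X$, set $\bar x=[x]_{X^*}$, and for $t\in[0,1]$ define
\[
x_t = \bar x + t(x - \bar x),\qquad \phi(t)=f(x_t)-f^*.
\]
Note that $\phi(0)=0$, $\phi(1)=f(x)-f^*$, and $\phi'(t)=\langle \nabla f(x_t), x-\bar x\rangle$, so that in particular $\phi'(0)=\langle \nabla f(\bar x), x-\bar x\rangle$, which is exactly the linear term appearing in \eqref{qap}.

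The first substantive step is to verify $x_t\in X$ (immediate, by convexity of $X$) and, more importantly, that $[x_t]_{X^*}=\bar x$ for every $t\in[0,1]$. For this I would use convexity of $X^*$ (the minimizer set of a convex program) together with the projection characterization $\langle x-\bar x,y-\bar x\rangle\le 0$ for all $y\in X^*$: multiplying by $t\ge 0$ gives $\langle x_t-\bar x,y-\bar x\rangle\le 0$, which is exactly the characterization of $\bar x$ as the projection of $x_t$ onto $X^*$. This is the only delicate ingredient of the proof; everything else is mechanical.

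Once this is in place, applying the quasi-strong convexity inequality \eqref{wsc_basic} at the point $x_t$, using $x_t-\bar x = t(x-\bar x)$, yields
\[
t\,\phi'(t)\;\ge\;\phi(t)+\frac{\kappa_f\,t^2}{2}\|x-\bar x\|^2\qquad \forall t\in(0,1].
\]
Dividing by $t^2$ recognizes the left-hand side as the derivative of the secant slope:
\[
\frac{d}{dt}\!\left(\frac{\phi(t)}{t}\right)=\frac{t\phi'(t)-\phi(t)}{t^2}\;\ge\;\frac{\kappa_f}{2}\|x-\bar x\|^2.
\]

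Finally I would integrate this differential inequality over $t\in(0,1]$. The boundary values are $\phi(1)/1 = f(x)-f^*$ and $\lim_{t\to 0^+}\phi(t)/t=\phi'(0)=\langle \nabla f(\bar x), x-\bar x\rangle$ (using $\phi(0)=0$ and continuous differentiability). This yields
\[
f(x)-f^* - \langle \nabla f(\bar x), x-\bar x\rangle \;\ge\; \frac{\kappa_f}{2}\|x-\bar x\|^2,
\]
which is exactly \eqref{qap}. The inclusion $q\mathcal{S}_{L_f,\kappa_f}(X)\subseteq \mathcal{U}_{L_f,\kappa_f}(X)$ then follows since both classes additionally require convexity and an $L_f$-Lipschitz gradient. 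I expect the projection step $[x_t]_{X^*}=\bar x$ to be the only place one could slip up; the derivative/integration manipulation is standard.
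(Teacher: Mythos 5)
Your proof is correct, and it takes a genuinely different (and arguably cleaner) route than the paper. Both arguments ultimately do the same thing at heart: apply the quasi-strong convexity inequality \eqref{wsc_basic} at the intermediate points $x_t=\bar x+t(x-\bar x)$ and integrate over the segment, using the key geometric fact $[x_t]_{X^*}=\bar x$ (which you prove explicitly via the projection characterization $\langle x-\bar x,\,y-\bar x\rangle\le 0$ for $y\in X^*$; the paper uses it silently here and only states it later, in the proof of Theorem \ref{th_qap=var}). The difference is in how the full constant $\kappa_f$ is extracted. The paper seeds the argument with plain convexity (i.e.\ \eqref{qap0} with $\kappa_f(0)=0$), feeds it back through the integral Taylor formula to get \eqref{qap} with constant $\tfrac{\kappa_f(0)+\kappa_f}{2}$, and then bootstraps: iterating gives $\kappa_f(t)=\tfrac{\kappa_f(0)+(2^t-1)\kappa_f}{2^t}\to\kappa_f$, and the full constant is recovered only in the limit. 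You instead recognize $\tfrac{t\phi'(t)-\phi(t)}{t^2}$ as $\tfrac{d}{dt}\bigl(\phi(t)/t\bigr)$, so \eqref{wsc_basic} becomes a differential inequality for the secant slope, and a single integration over $(0,1]$ with the boundary values $\phi(1)=f(x)-f^*$ and $\lim_{t\to0^+}\phi(t)/t=\langle\nabla f(\bar x),x-\bar x\rangle$ yields \eqref{qap} with the exact constant $\kappa_f$ in one shot, with no iteration or limiting argument over constants (and, incidentally, without ever invoking convexity of $f$ in the implication itself). What your approach buys is brevity and the avoidance of the recursive limit; what the paper's buys is that it sticks to the integral form of Taylor's theorem and avoids discussing the $t\to0^+$ boundary behavior of $\phi(t)/t$, which in your argument requires the (mild) observation that $\phi$ is one-sidedly differentiable at $0$ because $f$ is continuously differentiable at $\bar x$.
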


\begin{proof}
Let $f \in q\mathcal{S}_{L_f,\kappa_f}^{}(X)$. Since $f$ is convex function, it satisfies
the  inequality  \eqref{qap} with some constant $\kappa_f(0) \geq 0$, i.e.:
\begin{align}
\label{qap0}
f(x) \geq  f^* +  \langle \nabla f(\bar{x}), x - \bar{x} \rangle  +  \frac{\kappa_f(0)}{2} \|x - \bar{x} \|^2.
\end{align}
Using first order Taylor approximation in the integral form we have:
\begin{align*}
& f(x)   = f(\bar{x}) + \int_0^1 \langle \nabla f(\bar{x} + \tau(x - \bar{x})), x -\bar{x} \rangle d\tau \\
& = f(\bar{x}) +  \int_0^1 \frac{1}{\tau} \langle \nabla f(\bar{x} + \tau(x - \bar{x})), \tau(x -\bar{x}) \rangle d\tau \\
& \overset{\eqref{wsc_basic} \; \text{in} \; \bar{x} + \tau (x - \bar{x})}{\geq}  f(\bar{x}) +  \int_0^1 \frac{1}{\tau} \left( f(\bar{x} + \tau(x - \bar{x})) - f(\bar{x}) + \frac{\kappa_f}{2} \|\tau(x - \bar{x}) \|^2  \right) d\tau \\
&\overset{\eqref{qap0}}{\geq} \! f(\bar{x}) \!+\!\! \int_0^1 \! \frac{1}{\tau} \! \left(\! \langle \nabla f(\bar{x}), \tau(x -\bar{x}) \rangle + \frac{\kappa_f(0)}{2} \| \tau(x - \bar{x}) \|^2 \!\right)\! +\! \frac{1}{\tau} \frac{\kappa_f}{2} \|\tau(x - \bar{x}) \|^2  d\tau \\
& = f(\bar{x}) + \int_0^1  \langle \nabla f(\bar{x}), x -\bar{x} \rangle + \frac{\tau \kappa_f(0)}{2} \|x - \bar{x} \|^2 + \frac{\tau \kappa_f}{2} \|x - \bar{x} \|^2  d\tau\\
&=  f(\bar{x}) + \langle \nabla f(\bar{x}), x -\bar{x} \rangle + \frac{\kappa_f(0) + \kappa_f}{2} \cdot \frac{1}{2} \|x - \bar{x} \|^2.
\end{align*}
If we denote $\kappa_f(1) = \frac{\kappa_f(0) + \kappa_f}{2}$, then
we get that inequality  \eqref{qap0} also holds for $\kappa_f(1)$.
Repeating the same argument as above for  $f \in
q\mathcal{S}_{L_f,\kappa_f}^{}(X)$ and satisfying  \eqref{qap0}  for
$\kappa_f(1)$ we get that inequality  \eqref{qap0} also holds for
$\kappa_f(2) = \frac{\kappa_f(1) + \kappa_f}{2} = \frac{\kappa_f(0)
+ 3 \kappa_f}{4}$. Iterating this procedure we obtain that after $t$
steps:
\[ \kappa_f(t) = \frac{\kappa_f(t-1) + \kappa_f}{2} =
\frac{\kappa_f(0) + (2^t -1) \kappa_f}{2^t} \to \kappa_f \quad
\text{as} \quad t \to \infty.  \] Since after any $t$ steps the
inequality  \eqref{qap0} holds with $\kappa_f(t)$, using  continuity
of $\kappa_f(t)$ in \eqref{qap0} we obtain \eqref{qap}. This proves
our statement. \qed
\end{proof}
Moreover, combining the inequalities  \eqref{lipg2} and \eqref{qap},
we obtain that  the condition number of  objective function $f \in \mathcal{U}_{L_f,\kappa_f}(X)$, defined as $\mu_f = \kappa_f/L_f$, satisfies:
\begin{align}
\label{cond_Pqap}
0 < \mu_f  \leq 1.
\end{align}


\subsection{Quadratic gradient growth}
Let us define the class of functions  satisfying a bound  on the
variation  of gradients over the set  $X$. It is obtained by
relaxing the second inequality in \eqref{sc_equiv} by choosing  $y=
\bar{x} \equiv [x]_{X^*}$:

\begin{definition}
Continuously differentiable  function $f$  has a \textit{quadratic gradient growth}   on set  $X$ if there exists a constant $\kappa_f >0$ such that for any $x \in X$ and  $\bar{x} = [x]_{X^*}$  we have:
\begin{align}
\label{gvar} \langle  \nabla f(x) - \nabla f(\bar{x}), x - \bar{x}
\rangle \geq \kappa_f \| x - \bar{x}\|^2 \quad \forall x \in X.
\end{align}
\end{definition}

\noindent Now, let us denote the class of convex  differentiable
functions with Lipschitz gradient  and satisfying the  quadratic
gradient growth \eqref{gvar} by $\mathcal{G}_{L_f,\kappa_f}(X)$. In
\cite{ZhaYin:13} the authors analyzed a similar class of objective functions,
but for unconstrained optimization problems, that is $X = \rset^n$, which
was called  \textit{restricted strong convexity} and was defined
as: there exists  a constant $\kappa_f>0$ such that
$\langle \nabla f(x), x - \bar{x} \rangle \geq \kappa_f \| x -
\bar{x}\|^2 $ for all $x \in \rset^n$.
An immediate consequence of Theorem \ref{th_qap=wsc} is the
following inclusion:
\begin{theorem}
\label{th_var=wsc}
Inequality \eqref{wsc_basic} implies inequality \eqref{gvar}. Therefore, the following inclusion holds:
\begin{align}
\label{inclusion_var} q\mathcal{S}_{L_f,\kappa_f}^{}(X)  \subseteq
\mathcal{G}_{L_f,\kappa_f}(X).
\end{align}
\end{theorem}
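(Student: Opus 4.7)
The plan is to piggy-back on the previous Theorem~\ref{th_qap=wsc}: once we know that \eqref{wsc_basic} implies \eqref{qap}, the quadratic gradient growth inequality \eqref{gvar} drops out by a one-line addition of the two bounds evaluated at the same pair $(x,\bar{x})$ with $\bar{x}=[x]_{X^*}$.

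Concretely, I would first invoke Theorem~\ref{th_qap=wsc} to rewrite the hypothesis as the pair of inequalities
\begin{align*}
 f^* &\geq f(x) + \langle \nabla f(x),\; \bar{x}-x\rangle + \tfrac{\kappa_f}{2}\|x-\bar{x}\|^2, \\
 f(x) &\geq f^* + \langle \nabla f(\bar{x}),\; x-\bar{x}\rangle + \tfrac{\kappa_f}{2}\|x-\bar{x}\|^2,
\end{align*}
the first being \eqref{wsc_basic} and the second being \eqref{qap}. Adding these two inequalities cancels the $f(x)$ and $f^*$ terms on each side and yields
\[
 0 \geq \langle \nabla f(\bar{x})-\nabla f(x),\; x-\bar{x}\rangle + \kappa_f\|x-\bar{x}\|^2,
\]
which, after rearranging the inner product with a sign flip, is exactly the quadratic gradient growth inequality \eqref{gvar}. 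The inclusion \eqref{inclusion_var} then follows immediately from the definition of the class $\mathcal{G}_{L_f,\kappa_f}(X)$.

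There is essentially no obstacle here beyond correctly invoking the previous theorem: the hard work (the integration-and-averaging argument that upgrades the trivial convexity bound $\kappa_f(0)\geq 0$ to the full constant $\kappa_f$ in \eqref{qap}) has already been done in the proof of Theorem~\ref{th_qap=wsc}. The only point worth flagging is that \eqref{gvar} requires the \emph{same} projection $\bar{x}=[x]_{X^*}$ to appear in both inequalities before adding, which is automatic since both \eqref{wsc_basic} and the conclusion \eqref{qap} of Theorem~\ref{th_qap=wsc} are stated for this particular choice of $\bar{x}$.
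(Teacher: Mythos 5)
Your proposal is correct and follows exactly the paper's own argument: invoke Theorem~\ref{th_qap=wsc} to obtain \eqref{qap}, then add \eqref{wsc_basic} and \eqref{qap} at the same point $x$ (with the same $\bar{x}=[x]_{X^*}$) so that $f(x)$ and $f^*$ cancel and \eqref{gvar} drops out with the full constant $\kappa_f$. Nothing is missing; the algebra and the remark about using the same projection are both sound.
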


\begin{proof}
If $f \in q\mathcal{S}_{L_f,\kappa_f}^{}(X)$, then $f$ satisfies the
inequality \eqref{wsc_basic}. From Theorem \ref{th_qap=wsc} we also
have that $f$  satisfies inequality \eqref{qap}.  By adding the two inequalities \eqref{wsc_basic} and
\eqref{qap} in $x$ we get:
\begin{align}
\label{inclusion_qap_cor}
\langle  \nabla f(x) - \nabla f(\bar{x}), x - \bar{x} \rangle \geq \kappa_f \| x - \bar{x}\|^2 \quad \forall x \in X,
\end{align}
which proves  that inequality \eqref{gvar} holds. \qed
\end{proof}

\noindent We prove below that \eqref{wsc_basic} or \eqref{qap} alone and convexity of $f$ implies \eqref{gvar} with constant  $\kappa_f/2$. Indeed, let us assume for example that \eqref{qap} holds, then we~have:
\begin{align*}
f(x)  & \geq  f^* +  \langle \nabla f(\bar{x}), x - \bar{x} \rangle  +  \frac{\kappa_f}{2} \|x - \bar{x} \|^2\\
& \geq f(x) + \langle \nabla f(x), \bar{x} - x \rangle +  \langle \nabla f(\bar{x}), x - \bar{x} \rangle  +  \frac{\kappa_f}{2} \|x - \bar{x} \|^2 \\
& = f(x) + \langle \nabla f(\bar{x}) - \nabla f(x), x - \bar{x} \rangle  +  \frac{\kappa_f}{2} \|x - \bar{x} \|^2,
\end{align*}
which leads to \eqref{gvar} with constant  $\kappa_f/2$.
Combining the inequalities  \eqref{lipg3} and \eqref{gvar}, we
obtain that  the condition number  of objective function $f \in \mathcal{G}_{L_f,\kappa_f}(X)$,
satisfies:
\begin{align}
\label{cond_Pqvar}
0 < \mu_f \leq 1.
\end{align}

\begin{theorem}
\label{th_qap=var} Inequality \eqref{gvar} implies inequality \eqref{qap}. Therefore, the following inclusion holds:
\begin{align}
\label{inclusion_qap_var} \mathcal{G}_{L_f,\kappa_f}^{}(X)
\subseteq \mathcal{U}_{L_f,\kappa_f}(X).
\end{align}
\end{theorem}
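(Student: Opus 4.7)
The plan is to imitate the integral-form Taylor expansion argument used in the proof of Theorem \ref{th_qap=wsc}, but now starting from the quadratic gradient growth property \eqref{gvar} instead of the quasi-strong convexity property \eqref{wsc_basic}. Fix $x \in X$, let $\bar{x} = [x]_{X^*}$, and for $\tau \in [0,1]$ set $x_\tau = \bar{x} + \tau(x - \bar{x})$. Since $X$ is convex, $x_\tau \in X$, so $\nabla f(x_\tau)$ is defined and we may use the representation
\begin{align*}
f(x) = f(\bar{x}) + \int_0^1 \langle \nabla f(x_\tau), x - \bar{x}\rangle\, d\tau.
\end{align*}

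The key auxiliary step, which I expect to be the main (albeit mild) obstacle, is to show that $\bar{x}$ is also the projection of $x_\tau$ onto $X^*$, so that \eqref{gvar} can legitimately be invoked at $x_\tau$. This follows from the variational characterization of projection onto the closed convex set $X^*$: the identity $\langle x - \bar{x}, x^* - \bar{x}\rangle \leq 0$ for all $x^* \in X^*$ yields, for any $x^* \in X^*$,
\begin{align*}
\|x_\tau - x^*\|^2 = \|\bar{x} - x^*\|^2 - 2\tau \langle x^* - \bar{x}, x - \bar{x}\rangle + \tau^2\|x - \bar{x}\|^2 \geq \tau^2\|x - \bar{x}\|^2 = \|x_\tau - \bar{x}\|^2,
\end{align*}
so indeed $[x_\tau]_{X^*} = \bar{x}$.

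With this in hand, I apply \eqref{gvar} at the point $x_\tau$ (whose projection onto $X^*$ is $\bar{x}$) and use $x_\tau - \bar{x} = \tau(x - \bar{x})$ to obtain
\begin{align*}
\langle \nabla f(x_\tau) - \nabla f(\bar{x}), x - \bar{x}\rangle \geq \kappa_f \tau \|x - \bar{x}\|^2.
\end{align*}
Substituting this into the integral representation, splitting off the $\nabla f(\bar{x})$ term, and integrating $\int_0^1 \tau\, d\tau = 1/2$ gives
\begin{align*}
f(x) \geq f(\bar{x}) + \langle \nabla f(\bar{x}), x - \bar{x}\rangle + \frac{\kappa_f}{2}\|x - \bar{x}\|^2,
\end{align*}
which is exactly \eqref{qap} since $f(\bar{x}) = f^*$. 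Combined with the fact that functions in $\mathcal{G}_{L_f,\kappa_f}(X)$ are by definition convex and have $L_f$-Lipschitz gradient, this establishes the inclusion \eqref{inclusion_qap_var}.
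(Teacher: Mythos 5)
Your proof is correct and follows essentially the same route as the paper: the first-order Taylor expansion in integral form along the segment from $\bar{x}$ to $x$, applying \eqref{gvar} at each point $\bar{x}+\tau(x-\bar{x})$, and integrating $\kappa_f\tau$ to get the factor $\kappa_f/2$. The only difference is that you explicitly verify $[\bar{x}+\tau(x-\bar{x})]_{X^*}=\bar{x}$ via the variational characterization of the projection, a fact the paper uses but states without proof, so your write-up is, if anything, slightly more complete.
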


\begin{proof}
Let $f \in \mathcal{G}_{L_f,\kappa_f}^{}(X)$, then from first
order Taylor approximation in the integral form we get:
\begin{align*}
f(x) & = f(\bar{x}) + \int_0^1 \langle \nabla f(\bar{x} + t(x-\bar{x})), x - \bar{x}  \rangle dt \\
& = f(\bar{x}) + \langle \nabla f(\bar{x}), x - \bar{x} \rangle +  \int_0^1 \langle \nabla f(\bar{x} + t(x-\bar{x})) - \nabla f(\bar{x}), x - \bar{x}  \rangle dt \\
& = \! f(\bar{x}) + \langle \nabla f(\bar{x}), x - \bar{x} \rangle + \! \int_0^1 \!  \frac{1}{t}\langle \nabla f(\bar{x} + t(x-\bar{x})) - \nabla f(\bar{x}), t(x - \bar{x})  \rangle dt \\
& \overset{\eqref{gvar}}{\geq}  f(\bar{x}) + \langle \nabla f(\bar{x}), x - \bar{x} \rangle + \int_0^1 \frac{1}{t} \kappa_f \|t(x -\bar{x})\|^2   dt \\
& = f(\bar{x}) + \langle \nabla f(\bar{x}), x - \bar{x} \rangle + \frac{\kappa_f}{2} \|x -\bar{x}\|^2,
\end{align*}
where we used that $[\bar{x} + t(x - \bar{x})]_{X^*} = \bar{x}$ for any $t \in [0, \ 1]$. This chain of inequalities proves that $f$ satisfies inequality \eqref{qap} with the same constant  $\kappa_f$. \qed
\end{proof}


\subsection{Quadratic functional growth}
We further define the class of  functions satisfying a quadratic
functional growth property on the set $X$. It shows that the
objective function grows faster than the squared distance between
any feasible point and the optimal set. More precisely,  since
$\langle \nabla f(x^*), y-x^* \rangle \geq 0$ for all $y \in X$ and
$x^* \in X^*$, then using this relation and choosing  $y =x$ and $ x
= \bar{x} \equiv [x]_{X^*}$  in the first inequality in
\eqref{sc_equiv},  we  get a relaxation of this strong convexity
condition  similar to inequality~\eqref{sc1}:

\begin{definition}
Continuously differentiable  function $f$  has a \textit{quadratic functional growth} on   $X$ if there exists a constant $\kappa_f >0$ such that for any $x \in X$ and  $\bar{x} = [x]_{X^*}$  we have:
\begin{align}\label{wsc}
f(x) - f^* \geq \frac{\kappa_f}{2} \|x - \bar{x} \|^2 \quad
\forall x \in X.
\end{align}
\end{definition}

\noindent Since the above  quadratic functional growth inequality is
given in $\bar{x}$, this does not mean that $f$  grows everywhere
faster than the quadratic function $\kappa_f/2 \|x - \bar{x} \|^2$.
We denote the class of convex  differentiable functions with
Lipschitz  continuous gradient and satisfying the quadratic
functional growth \eqref{wsc} by $\mathcal{F}_{L_f,\kappa_f}(X)$. We
now derive inclusion relations between the functional classes we
have introduced so far:

\begin{theorem}
\label{th_qap_sog}
The following chain of implications are valid:
\[ \eqref{sc_equiv} \Rightarrow  \eqref{wsc_basic} \Rightarrow \eqref{gvar} \Rightarrow \eqref{qap} \Rightarrow \eqref{wsc}. \]
Therefore,  the following inclusions hold:
\begin{align}
\label{inclusion_qap_sog} \mathcal{S}_{L_f,\kappa_f}^{}(X) \subseteq
q\mathcal{S}_{L_f,\kappa_f}^{}(X)  \subseteq
\mathcal{G}^{}_{L_f,\kappa_f}(X) \subseteq
\mathcal{U}_{L_f,\kappa_f}(X) \subseteq
\mathcal{F}_{L_f,\kappa_f}^{}(X).
\end{align}
\end{theorem}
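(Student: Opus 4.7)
The plan is to assemble this chain of implications by combining the two nontrivial inclusions already proved (Theorems~\ref{th_var=wsc} and \ref{th_qap=var}) with two quick one-line arguments at the endpoints. Specifically, I would verify the four links of the chain in order, then repackage them as the claimed inclusion of function classes.

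For the first link \eqref{sc_equiv} $\Rightarrow$ \eqref{wsc_basic}, I would specialize the first inequality in \eqref{sc_equiv} to the particular feasible point $y = \bar{x} = [x]_{X^*} \in X^* \subseteq X$, which by definition satisfies $f(\bar{x}) = f^*$. Substituting this immediately reproduces \eqref{wsc_basic} with the same constant $\kappa_f$; this is essentially the observation already recorded in \eqref{inclusion_sc}. The middle two links \eqref{wsc_basic} $\Rightarrow$ \eqref{gvar} and \eqref{gvar} $\Rightarrow$ \eqref{qap} are precisely Theorem~\ref{th_var=wsc} and Theorem~\ref{th_qap=var}, so I would simply cite them.

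For the final link \eqref{qap} $\Rightarrow$ \eqref{wsc}, I would use the first order optimality condition for (P) at the projected point $\bar{x} \in X^*$, namely $\langle \nabla f(\bar{x}), y - \bar{x} \rangle \geq 0$ for every $y \in X$. Applying this with $y = x \in X$ eliminates the linear term in \eqref{qap} and yields directly
\[
f(x) - f^* \;\geq\; \langle \nabla f(\bar{x}), x - \bar{x} \rangle + \frac{\kappa_f}{2}\|x - \bar{x}\|^2 \;\geq\; \frac{\kappa_f}{2}\|x - \bar{x}\|^2,
\]
which is \eqref{wsc} with the same $\kappa_f$.

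Since each implication preserves the constant $\kappa_f$ and all classes are defined within the class of convex differentiable functions with $L_f$-Lipschitz gradient on $X$, transitivity of implications gives the sequence of inclusions in \eqref{inclusion_qap_sog}. There is no real obstacle here: the heart of the argument (the Taylor/integration tricks) was already carried out in Theorems~\ref{th_qap=wsc}, \ref{th_var=wsc} and \ref{th_qap=var}; the only care needed is to check that the endpoint implications are constant-preserving and that $\bar{x}$ is indeed a feasible point at which optimality conditions can be invoked, both of which are immediate.
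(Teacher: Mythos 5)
Your proposal is correct and follows essentially the same route as the paper: the middle links are exactly Theorems~\ref{th_var=wsc} and \ref{th_qap=var}, the first link is the observation recorded in \eqref{inclusion_sc}, and the last link uses the optimality condition $\langle \nabla f(\bar{x}), x - \bar{x} \rangle \geq 0$ to drop the linear term in \eqref{qap}, just as in the paper's proof. No issues.
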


\begin{proof}
From the optimality conditions for problem (P) we have
$\langle \nabla f(\bar{x}), x - \bar{x} \rangle  \geq 0 $ for
all $x \in X$. Then,   for any objective function $f$ satisfying
\eqref{qap}, i.e. $f \in \mathcal{U}^{}_{L_f,\kappa_f}(X)$, we
also have \eqref{wsc}. In
conclusion,  from previous derivations, \eqref{inclusion_sc} and
Theorems \ref{th_var=wsc} and \ref{th_qap=var} we obtain our chain
of inclusions. \qed
\end{proof}

\noindent Let us define the  condition number of objective function $f \in \mathcal{F}_{L_f,\kappa_f}^{}(X)$
as $\mu_f = \frac{\kappa_f}{L_f}$. If the feasible set $X$
is unbounded, then combining \eqref{lipg2} with \eqref{wsc} and
considering $\|x - \bar{x}\| \to \infty$, we conclude that:
\begin{align}
\label{cond_Pwsc1}
0 < \mu_f \leq 1.
\end{align}
However, if the feasible set $X$ is bounded,  we may have
$\kappa_f^{s} \gg L_f$,  provided that $\|\nabla f(\bar{x})\|$ is
large, and thus the condition number might be greater than~$1$:
\begin{align}
\label{cond_Pwsc2}
\mu_f  \geq 1.
\end{align}
Moreover, from the inclusions given by  Theorem \ref{th_qap_sog} we conclude that:
\[     \mu_f(\mathcal{S}) \; \leq \;  \mu_f(q\mathcal{S})  \; \leq \;  \mu_f(\mathcal{G})   \;
\leq \;  \mu_f(\mathcal{U})  \; \leq \; \mu_f(\mathcal{F}).  \]

\noindent Let us denote the projected gradient step from $x$ with:
\[ x^+ = [x - 1/L_f \nabla f(x)]_X, \]
and its projection onto the optimal set $X^*$ with $\bar{x}^+
=[x^+]_{X^*}$. Then, we will show that if $x^+$ is closer to $X^*$
than $x$, then the objective function $f$ must satisfy the quadratic functional
growth \eqref{wsc}:
\begin{theorem}
\label{th_necesary_linconv} Let $f$ be a convex function with
Lipschitz continuous gradient with constant $L_f$. If there exists some positive
constant $\beta <1$ such that the following inequality holds:
\[ \| x^+  -  \bar{x}^+ \| \leq \beta \|x - \bar{x} \| \quad \forall x \in X,  \]
then $f$  satisfies the quadratic functional
growth \eqref{wsc} on $X$ with the constant $\kappa_f = L_f(1 - \beta)^2$.
\end{theorem}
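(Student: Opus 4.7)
The strategy is to combine two standard ingredients: a one-step descent inequality for the projected gradient step (valid under Lipschitz continuity alone, without any convexity beyond the one already assumed), and a triangle inequality controlled by the contraction hypothesis. The former bounds $f(x)-f(x^+)$ from below by a constant multiple of $\|x-x^+\|^2$; the latter turns $\|x-x^+\|$ into a lower bound proportional to $\|x-\bar{x}\|$; then $f(x^+)\geq f^*$ closes the argument.

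First, I would establish a descent inequality of the form
\[
f(x)-f(x^+) \;\geq\; \frac{L_f}{2}\,\|x-x^+\|^2 \qquad \forall x \in X.
\]
The derivation uses \eqref{lipg2} applied at the pair $(x,x^+)$, together with the variational characterization of the projection $x^+ = [x - (1/L_f)\nabla f(x)]_X$, which yields $\langle x^+ - x + (1/L_f)\nabla f(x),\, x-x^+\rangle \geq 0$, i.e. $\langle \nabla f(x), x^+-x\rangle \leq -L_f\|x^+-x\|^2$. Substituting into \eqref{lipg2} gives the claimed descent. Note that convexity of $f$ is not needed for this step, only the Lipschitz gradient assumption.

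Next, I would relate $\|x-x^+\|$ to $\|x-\bar{x}\|$ using the hypothesis. Since $\bar{x}^+\in X^*$ and $\bar{x}=[x]_{X^*}$ is by definition the closest optimal point to $x$, the triangle inequality gives
\[
\|x-\bar{x}\| \;\leq\; \|x-\bar{x}^+\| \;\leq\; \|x-x^+\| + \|x^+-\bar{x}^+\| \;\leq\; \|x-x^+\| + \beta\,\|x-\bar{x}\|,
\]
so that $(1-\beta)\|x-\bar{x}\|\leq \|x-x^+\|$ (here $\beta<1$ is crucial).

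Finally, squaring this inequality and plugging into the descent estimate, while using $f(x^+)\geq f^*$, yields
\[
f(x)-f^* \;\geq\; f(x)-f(x^+) \;\geq\; \frac{L_f}{2}\|x-x^+\|^2 \;\geq\; \frac{L_f(1-\beta)^2}{2}\,\|x-\bar{x}\|^2,
\]
which is exactly \eqref{wsc} with $\kappa_f = L_f(1-\beta)^2$. There is no substantive obstacle here; the only delicate point is making sure the triangle inequality is chained through $\bar{x}^+$ (an optimal point not necessarily equal to $\bar{x}$) so that the contraction hypothesis can be applied, and that the projection inequality $\|x-\bar{x}\|\leq \|x-\bar{x}^+\|$ is correctly invoked.
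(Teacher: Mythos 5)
Your proposal is correct and follows essentially the same route as the paper: the same descent inequality $f(x^+)\leq f(x)-\tfrac{L_f}{2}\|x-x^+\|^2$ (the paper derives it via the optimality conditions of the quadratic model defining $x^+$, which is the same projection inequality you use), the same triangle-inequality chain through $\bar{x}^+$ giving $(1-\beta)\|x-\bar{x}\|\leq\|x-x^+\|$, and the same conclusion via $f(x^+)\geq f^*$.
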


\begin{proof}
On the one hand,  from triangle inequality for the projection we
have:
\[  \|x - \bar{x} \| \leq \| x  -  \bar{x}^+ \| \leq \|x  - x^+\| + \| x^+ -
 \bar{x}^+ \|.  \]
Combining this relation with the condition from the theorem, that is
$\| x^+ -  \bar{x}^+ \| \leq \beta \|x - \bar{x} \|$, we get:
\begin{align}
\label{decrease_ball} (1-\beta) \|x - \bar{x} \|  \leq \|x  - x^+\|.
\end{align}
On the other hand, we note that $x^+$ is the optimal solution of the
problem:
\begin{align}
\label{op_g} x^+ = \arg \min_{z \in X} \left[f(x) + \langle \nabla f(x),
z-x \rangle + \frac{L_f}{2} \|z - x\|^2 \right].
\end{align}
From \eqref{lipg2} we have:
\[ f(x^+)  \leq f(x) + \langle \nabla f(x), x^+ - x
\rangle + \frac{L_f}{2} \| x^+ - x\|^2 \] and combining with    the
optimality conditions of \eqref{op_g} in $x$, that is $\langle \nabla f(x) + L_f(x^+ - x), x -x^+
\rangle \leq 0$, we get the following decrease
in terms of the objective function:
\begin{align}
\label{decrease_g} f(x^+) &  \leq f(x) - \frac{L_f}{2} \| x^+ -
x\|^2.
\end{align}
Finally, combining \eqref{decrease_ball} with \eqref{decrease_g},
and using  $f(x^+) \geq f^*$, we get our statement. \qed
\end{proof}



\subsection{Error bound property}
Let us recall the gradient mapping  of a continuous differentiable
function $f$ with Lipschitz continuous gradient in a point $x \in
\rset^n$: $g(x) = L_f (x - x^+)$, where  $x^+ = [x - 1/L_f \nabla
f(x)]_X$ is the projected gradient step from $x$. Note that $g(x^*)
=0$ for all $x^* \in X^*$. Moreover,  if $X = \rset^n$, then $g(x) = \nabla
f(x)$.  Recall that the main property of  the gradient mapping for
convex objective functions with Lipschitz continuous gradient of constant $L_f$
is given by the following inequality \cite{Nes:04}[Theorem 2.2.7]:
\begin{align}
\label{gmap} f(y) \geq f(x^+) + \langle g(x), y-x \rangle +
\frac{1}{2L_f} \|g(x)\|^2 \quad \forall y \in X \quad \text{and}
\quad x \in \rset^n.
\end{align}
Taking $y= \bar{x}$ in \eqref{gmap} and using that $f(x^+) \geq
f^*$, we get the  simpler inequality:
\begin{align}
\label{gmap1}
\langle g(x), x - \bar{x} \rangle \geq \frac{1}{2L_f} \|g(x)\|^2 \quad \forall x \in \rset^n.
\end{align}

\noindent In  \cite{LuoTse:92} Tseng
introduced an error bound condition that estimates the distance to the solution set from any feasible point by the norm of the proximal residual: there exists a constant $\kappa>0$ such that
$\| x - \bar{x}\| \leq \kappa \|x - [x -\nabla f(x)]_X \|$ for all $x \in X$. This notion was  further extended and analyzed in \cite{NecCli:14,WanLin:14}.  Next, we define an error bound type condition, obtained from the relaxation of the strong convex inequality \eqref{sc12} for the particular choice $y = \bar{x} \equiv [x]_{X^*}$.

\begin{definition}
The continuous differentiable  function $f$  has a \textit{global
error bound}  on  $X$ if there exists a constant
$\kappa_f>0$ such that for any $x \in X$ and  $\bar{x} =
[x]_{X^*}$  we have:
\begin{align}
\label{geb} \|g(x) \|  \geq \kappa_f \| x - \bar{x}\| \quad
\forall x \in X.
\end{align}
\end{definition}

\noindent  We denote the class of convex functions with Lipschitz continuous
gradient and satisfying the error bound  \eqref{geb} by $\mathcal{E}_{L_f,\kappa_f}^{}$.  Let us define the condition number of the objective function $f \in
\mathcal{E}_{L_f,\kappa_f}^{}(X)$ as $\mu_f=
\frac{\kappa_f}{L_f} $.  Combining  inequality \eqref{gmap1} and
\eqref{geb}  we conclude that the condition number satisfies the
inequality:
\begin{align}
\label{cond_Pgeb}
0< \mu_f \leq 2.
\end{align}
However, for  the unconstrained case, i.e. $X = \rset^n$ and $\nabla
f(\bar{x}) =0$, from \eqref{lipg} and \eqref{geb}  we get $0<
\mu_f \leq 1$. We now determine relations between the quadratic functional growth condition and the error bound condition.

\begin{theorem}
\label{th_sog=eb} Inequality \eqref{geb} implies inequality
\eqref{wsc} with constant $\mu_f \cdot \kappa_f$.  Therefore,  the
following inclusion holds for the functional class
$\mathcal{E}_{L_f,\kappa_f}^{}(X)$:
\begin{align}
\label{inclusion_sog=eb} \mathcal{E}_{L_f,\kappa_f}^{}(X)
\subseteq \mathcal{F}^{}_{L_f,\mu_f \cdot \kappa_f}(X).
\end{align}
\end{theorem}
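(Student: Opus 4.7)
The plan is to chain together three standard facts: (i) the descent inequality for the gradient mapping, (ii) the global optimality $f(x^+)\geq f^*$, and (iii) the error bound hypothesis \eqref{geb}, to produce the quadratic functional growth \eqref{wsc} with constant $\mu_f\cdot\kappa_f$.

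First, I would specialize inequality \eqref{gmap} to $y=x$ (which is admissible since $x\in X$). Because $\langle g(x),x-x\rangle=0$, this immediately gives
\[
f(x)\;\geq\;f(x^+)+\frac{1}{2L_f}\|g(x)\|^2\qquad\forall x\in X.
\]
(Alternatively, one may invoke \eqref{decrease_g} from the proof of Theorem~\ref{th_necesary_linconv}, which yields the same bound after noting $L_f\|x^+-x\|=\|g(x)\|$.) Next, since $x^+=[x-(1/L_f)\nabla f(x)]_X\in X$ and $f^*=\min_{z\in X}f(z)$, we have $f(x^+)\geq f^*$, so subtracting $f^*$ from both sides gives
\[
f(x)-f^*\;\geq\;\frac{1}{2L_f}\|g(x)\|^2\qquad\forall x\in X.
\]

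Then I would apply the error bound \eqref{geb}, namely $\|g(x)\|\geq \kappa_f\|x-\bar x\|$, by squaring both sides and substituting into the previous display. This yields
\[
f(x)-f^*\;\geq\;\frac{\kappa_f^2}{2L_f}\|x-\bar x\|^2\;=\;\frac{\mu_f\cdot\kappa_f}{2}\|x-\bar x\|^2\qquad\forall x\in X,
\]
where $\mu_f=\kappa_f/L_f$ is the condition number of $f\in\mathcal{E}_{L_f,\kappa_f}(X)$. This is precisely inequality \eqref{wsc} with constant $\mu_f\cdot\kappa_f$, so $f\in\mathcal{F}_{L_f,\mu_f\cdot\kappa_f}(X)$, which proves the inclusion \eqref{inclusion_sog=eb}.

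No serious obstacle is expected here: the argument is a clean three-line chain, and the only subtle point is recognizing that the descent inequality allows one to lower-bound $f(x)-f^*$ (rather than $f(x^+)-f^*$) by $\|g(x)\|^2/(2L_f)$, which is what makes the error bound convertible into functional growth at the base point $x$ itself.
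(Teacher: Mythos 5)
Your proof is correct and follows essentially the same route as the paper: the paper combines its descent inequality \eqref{decrease_g} (equivalent to your use of \eqref{gmap} with $y=x$, since $\|g(x)\| = L_f\|x-x^+\|$) with $f(x^+)\geq f^*$ and the error bound \eqref{geb} to get $\kappa_f^2\|x-\bar x\|^2 \leq \|g(x)\|^2 \leq 2L_f(f(x)-f^*)$, which is exactly your chain. No gaps.
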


\begin{proof}
Combining  \eqref{decrease_g} and \eqref{geb}  we obtain:
\begin{align*}
\kappa_f^2 \| x - \bar{x} \|^2  \leq  \|g(x)\|^2 \leq  2 L_f (f(x) - f(x^+)) \leq 2L_f (f(x) - f^*)   \quad \forall x \in X.
\end{align*}
In conclusion, inequality \eqref{wsc} holds with the constant $
\frac{\kappa_f^2}{L_f} =  \mu_f  \cdot \kappa_f$,
where we recall $\mu_f = \kappa_f/L_f$. This also proves the inclusion:
$\mathcal{E}^{}_{L_f,\kappa_f} (X) \subseteq
\mathcal{F}_{L_f,\mu_f \cdot \kappa_f}^{}(X)$.    \qed
\end{proof}

\begin{theorem}
\label{th_eb=sog}
Inequality \eqref{wsc} implies inequality \eqref{geb} with constant $\frac{1}{1 + \mu_f + \sqrt{1 + \mu_f}} \cdot \kappa_f$. Therefore,  the following inclusion holds for the functional class $\mathcal{F}_{L_f,\kappa_f}^{}(X)$:
\begin{align}
\label{inclusion_eb=sog} \mathcal{F}_{L_f,\kappa_f}^{}(X) \subseteq
\mathcal{E}^{}_{L_f,\frac{1}{1 + \mu_f + \sqrt{1 + \mu_f}} \cdot
\kappa_f}(X).
\end{align}
\end{theorem}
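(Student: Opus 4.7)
The plan is to bracket the quantity $\|x^+ - \bar{x}^+\|$ (where $x^+$ is the projected gradient step from $x$ and $\bar{x}^+ = [x^+]_{X^*}$) both from below and from above, and then to extract \eqref{geb} from the resulting quadratic inequality in $\|g(x)\|$. The lower bound will come from the triangle inequality together with the projection definition, while the upper bound will combine the quadratic functional growth \eqref{wsc} evaluated at $x^+$ with the gradient mapping inequality \eqref{gmap}.

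For the lower bound, I would note that since $\bar{x}^+ \in X^*$ and $\bar{x} = [x]_{X^*}$ is the closest point of $X^*$ to $x$, we have $\|x - \bar{x}\| \leq \|x - \bar{x}^+\| \leq \|x - x^+\| + \|x^+ - \bar{x}^+\|$, which together with $\|x - x^+\| = \|g(x)\|/L_f$ yields $\|x^+ - \bar{x}^+\| \geq \|x - \bar{x}\| - \|g(x)\|/L_f$. If the right-hand side is nonpositive, then $\|g(x)\| \geq L_f\|x - \bar{x}\|$, which already implies \eqref{geb} since $L_f \geq \kappa_f/(1 + \mu_f + \sqrt{1 + \mu_f})$. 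Otherwise, squaring is legitimate. For the upper bound, \eqref{wsc} applied at $x^+$ gives $\|x^+ - \bar{x}^+\|^2 \leq (2/\kappa_f)(f(x^+) - f^*)$, and plugging $y = \bar{x}$ into \eqref{gmap} with Cauchy--Schwarz yields $f(x^+) - f^* \leq \|g(x)\|\|x - \bar{x}\| - \|g(x)\|^2/(2L_f)$, whence
\begin{equation*}
\|x^+ - \bar{x}^+\|^2 \;\leq\; \frac{2}{\kappa_f}\|g(x)\|\|x - \bar{x}\| - \frac{1}{L_f\kappa_f}\|g(x)\|^2.
\end{equation*}

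Setting $a = \|g(x)\|$ and $b = \|x - \bar{x}\|$ and comparing the squared lower bound with the upper bound will produce the quadratic inequality
\begin{equation*}
(L_f + \kappa_f)\, a^2 \;-\; 2 L_f (L_f + \kappa_f)\, b\, a \;+\; L_f^2 \kappa_f\, b^2 \;\leq\; 0.
\end{equation*}
The only remaining step, which is where the delicate part lies, is the algebraic extraction of the constant: compute the discriminant (which works out to $4 L_f^3 (L_f + \kappa_f) b^2$), identify the smaller root as $L_f b\,(1 - 1/\sqrt{1 + \mu_f})$, and rationalize it to $\kappa_f b / (1 + \mu_f + \sqrt{1 + \mu_f})$. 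Since $a$ must lie between the two roots, this gives $\|g(x)\| \geq \kappa_f\|x - \bar{x}\|/(1 + \mu_f + \sqrt{1 + \mu_f})$, which is exactly \eqref{geb} with the stated constant and hence the inclusion \eqref{inclusion_eb=sog}. The conceptual content is therefore all in the two-sided estimate of $\|x^+ - \bar{x}^+\|$; the main obstacle is purely computational, namely to verify that this particular combination yields precisely the factor $1/(1 + \mu_f + \sqrt{1 + \mu_f})$ rather than a weaker one.
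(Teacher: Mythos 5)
Your proof is correct and follows essentially the same route as the paper: both combine the gradient mapping inequality \eqref{gmap}, the quadratic growth \eqref{wsc} applied at $x^+$, and the triangle inequality through $\bar{x}^+$, differing only in algebraic packaging (the paper evaluates \eqref{gmap} at $y=\bar{x}^+$, completes a square to obtain $\|g(x)\| \geq \left(\sqrt{L_f(L_f+\kappa_f)}-L_f\right)\|x^+-\bar{x}^+\|$, and then rearranges linearly, which avoids the sign case analysis you needed before squaring, whereas you solve a quadratic inequality in $\|g(x)\|$ via its roots). Both arguments yield exactly the constant $\kappa_f/(1+\mu_f+\sqrt{1+\mu_f})$, so the inclusion \eqref{inclusion_eb=sog} follows as stated.
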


\begin{proof}
From the gradient  mapping  property \eqref{gmap} evaluated at the point
$y = \bar{x}^+ \equiv [x^+]_{X^*}$, we  get:
\begin{align*}
f^* & \geq f(x^+) + \langle g(x), \bar{x}^+ - x \rangle + \frac{1}{2L_f} \|g(x)\|^2 \\
& = f(x^+)  + \langle g(x), \bar{x}^+ - x^+ \rangle - \frac{1}{2L_f} \|g(x)\|^2.
\end{align*}
Further, combining the previous inequality and \eqref{wsc}, we obtain:
\[ \langle g(x), x^+ - \bar{x}^+ \rangle + \frac{1}{2L_f} \|g(x)\|^2 \geq f(x^+) - f^* \geq \frac{\kappa_f}{2} \|x^+ - \bar{x}^+ \|^2.  \]
Using Cauchy-Schwartz inequality for the scalar product and then rearranging the terms we obtain:
\[  \frac{1}{2 L_f} \left( \|g(x)\| + L_f \|x^+ - \bar{x}^+\|  \right)^2 \geq \frac{\kappa_f+ L_f}{2} \|x^+ - \bar{x}^+\|^2  \]
or equivalently
\[  \|g(x)\| + L_f \|x^+ - \bar{x}^+\| \geq \sqrt{L_f(\kappa_f + L_f)} \|x^+ - \bar{x}^+\|.  \]
We conclude that:
\[  \|g(x)\|  \geq \left( \sqrt{L_f(\kappa_f + L_f)} -L_f \right) \|x^+ - \bar{x}^+\|. \]
Since $$\|x - \bar{x}\| \leq \|x - \bar{x}^+\|  \leq \| x - x^+\| +
\|x^+ - \bar{x}^+\| = \frac{1}{L_f} \|g(x)\| + \|x^+ -
\bar{x}^+\|,$$ then we obtain:
\[  \|g(x)\|  \geq \left( \sqrt{L_f(\kappa_f + L_f)} -L_f \right) \left( \|x - \bar{x}\| - \frac{1}{L_f} \|g(x)\| \right).  \]
After simple manipulations and using that $\mu_f = \kappa_f/L_f$, we
arrive at:
\[  \|g(x)\|  \geq \frac{\kappa_f}{1 + \mu_f + \sqrt{1 + \mu_f}}  \| x - \bar{x}\|, \]
which shows that inequality \eqref{geb} is valid for the constant $\frac{1}{1 + \mu_f + \sqrt{1 + \mu_f}} \cdot \kappa_f$. \qed
\end{proof}

\noindent Note that the  functional classes  we have introduced
previously were obtained by relaxing the strong convexity
inequalities \eqref{sc_equiv}--\eqref{sc12} for some particular
choices of $x$  and  $y$. The reader can find other favorable
examples of relaxations of strong convexity inequalities and we
believe that this paper opens an window of opportunity for
algorithmic research in non-strongly convex optimization settings.
In the next section we provide concrete examples of objective
functions that can be found in  the functional classes  introduced
above.


\section{Functional classes in  $q\mathcal{S}_{L_f,\kappa_f}^{}(X)$, $\mathcal{G}_{L_f,\kappa_f}^{}(X)$ and $\mathcal{F}_{L_f,\kappa_f}^{}(X)$}
\label{sec_gAx}
We now provide examples of structured convex optimization problems  whose objective function satisfies one of our relaxations of strong convexity conditions  that we have introduced in the previous sections.  We start first recalling some error bounds for the solutions of a system of linear equalities and inequalities.   Let $A\in
\rset^{p \times n}$, $C \in \rset^{m \times n}$ and the arbitrary
norms $\norm{\cdot}_{\alpha}$ and $\norm{\cdot}_{\beta}$ in
$\rset^{m+p}$ and $\rset^n$. Given the nonempty polyhedron:
\[ \mathcal{P} =
\{x \in \rset^n: \; Ax = b, \;  Cx \le d \}, \] then there exists  a
constant $\theta(A,C)>0$ such that Hoffman inequality holds (for a
proof of the Hoffman inequality see e.g. \cite{Hof:52,WanLin:14}):
$$\|x -\bar{x}\| \le \theta(A,C) \left\| \begin{matrix} Ax-b \\
[Cx-d]_+ \end{matrix}\right\|_{\alpha} \quad \forall x \in
\rset^n,$$ where $\bar{x} = [x]_\mathcal{P} \equiv \arg\min_{z \in
\mathcal{P}} \norm{z-x}_{\beta}$. The constant $\theta(A,C)$ is the
Hoffman constant for the polyhedron $\mathcal{P}$ with respect to the pair
of norms $(\norm{\cdot}_{\alpha}, \norm{\cdot}_{\beta})$. In
\cite{KlaThi:95}, the authors provide  several estimates for the
Hoffman constant. Assume that $A$ has full row rank and define the
following quantity:
\begin{equation*}
\zeta_{\alpha,\beta} (A,C) := \min\limits_{I \in \mathcal{J}} \min_{u,v}
\left\{\norm{A^Tu + C^Tv}_{\beta^*} : \left\|\begin{matrix} u \\ v
\end{matrix}\right\|_{\alpha^*}=1, v_I \ge 0, v_{[m]\backslash
I}=0\right\},
\end{equation*}
where $\mathcal{J} = \{I \in 2^{[m]}: \text{card} \; I = r-p,
\text{rank}[A^T,\; C^T_I]=r \}$  and $r = \text{rank} [A^T, \; C^T]$. An alternative formulation of the
above quantity is:

\begin{equation}\label{Hoff_fullrank}
\frac{1}{\zeta_{\alpha,\beta} (A,C)} \!=\! \sup \left\{\!
\left\|\begin{matrix} u \\ v \end{matrix}\right\|_{\alpha^*}\!\!:
\begin{matrix}\norm{A^Tu + C^Tv}_{\beta^*}=1,
\text{rows of} \; C \\
 \text{corresponding to nonzero components of} \; v \\
\text{and rows of }\; A \; \text{are linearly independent}
\end{matrix} \!\right\}.
\end{equation}
In \cite{KlaThi:95} it was proved  that  $\zeta_{\alpha,\beta}(A,C)^{-1}$, where  $\zeta_{\alpha,\beta} (A,C)$ is defined in \eqref{Hoff_fullrank},  is the
Hoffman constant for the polyhedral set $\mathcal{P}$ w.r.t. the norms
$(\norm{\cdot}_{\alpha}, \norm{\cdot}_{\beta})$.

\noindent   Considering the Euclidean setting ($\alpha=\beta=2$) and
the above assumptions, then from previous discussion we have:
\[\theta (A,C) =  \max_{I \in \mathcal{J}}
\frac{1}{\sigma_{\min}([A^T,\; C^T_I]^T)}. \]
Under some regularity condition we can state a simpler
form for $\zeta_{\alpha,2}(A,C)$. Assume that $A$ has full row rank and that
the set $\{h \in \rset^n: \; Ah=0, \;  Ch<0 \}\neq \emptyset$, then,
we have \cite{KlaThi:95}:
\begin{equation}\label{Hoff_fullrank2}
\zeta_{\alpha,2} (A,C) := \min \left\{\norm{A^Tu + C^Tv}_2 :
\left\|\begin{matrix} u \\ v \end{matrix}\right\|_{\alpha^*}=1, v
\ge 0\right\}.
\end{equation}

\noindent Thus, for the special case  $m=0$,  i.e. there are no
inequalities, we have $\zeta_{2,2} (A,0) = \sigma_{\min}(A)$, where
$\sigma_{\min}(A)$ denotes the smallest nonzero singular value of
$A$,  and the Hofman constant is\footnote{This result can be also
proved using simple algebraic arguments. More precisely, from
Courant-Fischer theorem we know that $\|Ax\| \geq \sigma_{\min}(A)
\|x\|$ for all $x \in \text{Im}(A^T)$. Since we assume that our
polyhedron ${\cal P}=\{x: \; Ax=b\}$ is non-empty, then $x -
[x]_{\cal P} \in \text{Im}(A^T)$ for all $x \in \rset^n$ (from KKT
conditions of $\min_{z: Az=b} \|x - z\|^2$ we have that there exists
$\mu$ such that $x - [x]_{{\cal P}} + A^T \mu =0$). In conclusion,
we get:
\[ \| Ax - b\| = \| Ax - A [x]_{{\cal P}} \| \geq
\sigma_{\text{min}}(A) \|x - [x]_{{\cal P}}\| =
\sigma_{\text{min}}(A)  \text{dist}_2(x,\mathcal{P})  \quad \forall
x \in \rset^n. \]}:
\begin{equation}
\label{Hoff_sigma}
\theta (A,0) = \frac{1}{\sigma_{\min}(A)}.
\end{equation}


\subsection{Composition of strongly convex function with
linear map is in $q\mathcal{S}_{L_f,\kappa_f}^{}(X)$} Let us
consider the class of optimization problems  (P) having the
following structured  form:
\begin{align}
\label{generalcls}
f^* =& \min_{x} f(x) \;  \equiv g(Ax)  \\
     & \text{s.t.}: \quad x \in X \equiv \{x \in \rset^n: \; Cx \leq d \}, \nonumber
\end{align}
i.e. the objective function is in the form $f(x) = g(Ax)$, where $g$
is a smooth and strongly convex  function and $A \in \rset^{m \times
n}$ is a nonzero general matrix. Problems  of this form arise in
various applications including dual formulations of linearly
constrained convex problems, convex quadratic  problems, routing
problems in data networks, statistical regression and many others.
Note that if $A$ has full column rank, then  $g(Ax)$ is strongly convex
function. However, if $A$ is rank deficient, then $g(Ax)$ is
not strongly convex. We prove in the  next theorem that
the objective function of problem \eqref{generalcls}
belongs to the class $q\mathcal{S}_{L_f,\kappa_f}^{}$.

\begin{theorem}
\label{the_gAx} Let $X =\{x \in \rset^n: \; Cx \leq d \}$ be a
polyhedral set, function $g:\rset^{m} \to \rset^{}$ be
$\sigma_g$-strongly convex with $L_g$-Lipschitz continuous
gradient on $X$, and $A \in \rset^{m \times n}$  be a nonzero matrix.
Then, the convex  function $f(x) = g(Ax)$  belongs to the class
$q\mathcal{S}_{L_f,\kappa_f}^{}(X)$, with constants $L_f = L_g
\|A\|^2$ and  $\kappa_f = \frac{\sigma_g}{\theta^2(A,C)}$, where
$\theta(A,C)$ is the Hoffman constant for the polyhedral  optimal
set~$X^*$.
\end{theorem}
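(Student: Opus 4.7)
The plan is to establish the Lipschitz constant of $\nabla f$ by a direct chain-rule computation, and then derive the quasi-strong convexity constant by reducing to the strong convexity of $g$ on the image space, closing the loop with a Hoffman bound on the optimal set.

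For the Lipschitz constant, I would write $\nabla f(x) = A^T \nabla g(Ax)$ and chain the estimate $\|\nabla f(x)-\nabla f(y)\| \leq \|A\|\,\|\nabla g(Ax)-\nabla g(Ay)\| \leq L_g\|A\|\,\|Ax-Ay\| \leq L_g\|A\|^2\|x-y\|$, which gives $L_f=L_g\|A\|^2$. This is entirely mechanical.

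The crux is the quasi-strong convexity bound. The key observation I would establish first is that $A$ is constant on $X^*$: if $x_1^*,x_2^*\in X^*$ had $Ax_1^*\neq Ax_2^*$, then strong convexity of $g$ applied to $Ax_1^*,Ax_2^*$ would make $f\bigl((x_1^*+x_2^*)/2\bigr) = g\bigl(A(x_1^*+x_2^*)/2\bigr) < f^*$, contradicting optimality. Let $t^* := Ax^*$ denote this common value; then $\nabla f(x^*)=A^T\nabla g(t^*)$ is also constant on $X^*$, and
\begin{equation*}
X^* = \{x\in\rset^n:\ Ax=t^*,\ Cx\leq d\}
\end{equation*}
is a polyhedron, so the Hoffman inequality applies with constant $\theta(A,C)$: for every $x\in X$, since $Cx\leq d$ the inequality part vanishes and we get $\|x-\bar{x}\| \leq \theta(A,C)\,\|Ax-t^*\| = \theta(A,C)\,\|Ax-A\bar{x}\|$.

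With those two facts in hand, the quasi-strong convexity of $f$ follows by applying strong convexity of $g$ to the pair $(Ax, A\bar{x})=(Ax,t^*)$:
\begin{align*}
f^* = g(A\bar{x}) &\geq g(Ax) + \langle \nabla g(Ax),\,A\bar{x}-Ax\rangle + \tfrac{\sigma_g}{2}\|Ax-A\bar{x}\|^2 \\
&= f(x) + \langle \nabla f(x),\,\bar{x}-x\rangle + \tfrac{\sigma_g}{2}\|Ax-A\bar{x}\|^2,
\end{align*}
and then substituting the Hoffman bound $\|Ax-A\bar{x}\|^2 \geq \theta(A,C)^{-2}\|x-\bar{x}\|^2$ yields \eqref{wsc_basic} with $\kappa_f=\sigma_g/\theta^2(A,C)$. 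The delicate step is the one I highlighted: verifying that $Ax^*$ is constant across the optimal set so that the Hoffman estimate can be invoked on a fixed polyhedron; everything else is bookkeeping.
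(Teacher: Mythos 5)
Your proposal is correct and follows essentially the same route as the paper's proof: chain rule for $L_f$, constancy of $Ax^*$ over $X^*$ via strong convexity of $g$ at the midpoint, the Hoffman bound on the polyhedron $\{x: Ax=t^*,\ Cx\leq d\}$, and strong convexity of $g$ applied at $(Ax, A\bar{x})$ combined with the Hoffman estimate to get $\kappa_f=\sigma_g/\theta^2(A,C)$. No gaps.
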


\begin{proof}
The fact that $f$ has Lipschitz continuous gradient
follows immediately from the definition \eqref{lipg}. Indeed,
\begin{align*}
 \|\nabla f(x) - \nabla f(y) \| & =  \|A^T \nabla g(Ax) - A^T\nabla
g(Ay) \| \leq \|A\| \|\nabla g(Ax) - \nabla g(Ay) \| \\
& \leq \|A\| L_g \|Ax - Ay\| \leq \|A\|^2 L_g \| x - y\|.
\end{align*}
Thus, $L_f = L_g \|A\|^2$. Further, under  assumptions of the
theorem, there exists a unique pair $(t^*, T^*) \in \rset^{m} \times
\rset^{n}$ such that the following relations hold:
\begin{align}
\label{tstar}
A x^* = t^*, \quad  \nabla f(x^*) = T^* \quad \forall x^* \in X^*.
\end{align}
For completeness, we give a short proof of this well known fact
(see also \cite{LuoTse:92}):
let $x_1^*, x_2^*$ be two optimal points for the optimization
problem \eqref{generalcls}. Then, from convexity of $f$ and
definition of optimal points, it follows that:
\[  f \left( \frac{x_1^* + x_2^*}{2} \right) = \frac{f(x_1^*) + f(x_2^*)}{2}.  \]
Since $f(x) = g(Ax)$ we get from previous relation that:
\[ g \left( \frac{Ax_1^* + Ax_2^*}{2} \right) = \frac{g(Ax_1^*) + g(Ax_2^*)}{2}. \]
On the other hand using the definition of strong convexity
\eqref{sc} for $g$ we have:
\[  g \left( \frac{Ax_1^* + Ax_2^*}{2} \right) \leq  \frac{g(Ax_1^*) + g(Ax_2^*)}{2} -
 \frac{\sigma_g}{8} \|Ax_1^* - Ax_2^*\|^2. \]
Combining the previous two relations, we obtain that $Ax_1^* =
Ax_2^*$. Moreover, $\nabla f(x^*) = A^T \nabla g(A x^*)$.  In
conclusion,  $Ax$ and the gradient of $f$ are constant over the set
of optimal solutions $X^*$ for \eqref{generalcls}, i.e. the
relations \eqref{tstar} hold.  Moreover, we have that $f^* = f(x^*) = g(Ax^*) = g(t^*)$
for all $x^* \in X^*$. In conclusion, the set of optimal solutions
$X^*$ is described by the following polyhedral set:
\[ X^* =\{x^*: \;\; A x^* = t^*, \;\; C x^* \leq d  \}. \]
Since we assume that our optimization problem (P) has at least one solution, i.e. the optimal polyhedral set $X^*$ is non-empty, then from Hoffman inequality  we have that there exists some positive constant depending on the matrices $A$ and $C$
describing the polyhedral set $X^*$, i.e. $\theta(A,C)>0$, such that:
\[  \|x - \bar{x}\| \leq \theta(A,C) \left \| \begin{bmatrix}
     Ax - t^* \\
     [Cx - d]_+
\end{bmatrix} \right \| \quad \forall x \in \rset^n,  \]
where $\bar{x} = [x]_{X^*}$ (the projection of the vector $x$
onto the optimal set $X^*$).    Then, for any feasible $x$, i.e. $x$
satisfying $C x \leq d$, we have:
\[  \|x - \bar{x}\| \leq \theta(A,C) \|Ax - A \bar{x} \| \quad \forall x \in X.  \]
On the other hand, since $g$ is strongly convex,
it follows that:
\[ g(A \bar{x}) \overset{\eqref{sc_equiv}}{\geq} g(Ax) + \langle \nabla g(Ax), A\bar{x} - Ax \rangle +   \frac{\sigma_g}{2} \| Ax - A \bar{x}\|^2. \]
Combining the previous two relations and keeping in mind that $f(x) = g(Ax)$ and $\nabla f(x) =  A^T \nabla g(Ax)$,  we obtain:
\[ f^* \geq f(x) + \langle \nabla f(x), \bar{x} - x \rangle  + \frac{\sigma_g}{2 \theta^2(A,C)} \|x - \bar{x}\|^2 \quad \forall x \in X, \]
which proves that the quasi-strong convex inequality \eqref{wsc_basic} holds for the constant  $\kappa_f = \sigma_g/\theta^2(A,C)$.  \qed
\end{proof}

\noindent Note that we can relax the requirements for $g$ in Theorem
\ref{the_gAx}. For example, we can replace the  strong convexity assumption
on $g$ with the conditions that $g$ has unique minimizer $t^*$ and
it satisfies the quasi-strong convex condition \eqref{wsc_basic} with
constant $\kappa_g >0$. Then, using the same arguments as in the proof of Theorem
\ref{the_gAx}, we can show that for objective functions  $f(x) =
g(Ax)$ of problem (P),  the optimal set is $X^*=\{x^*: \;\; A x^* = t^*,
\;\; C x^* \leq d \} $ and $f$ satisfies   \eqref{wsc_basic} with constant $\kappa_f=
\frac{\kappa_g}{\theta^2(A,C)}$, provided that the corresponding
optimal set $X^*$ is nonempty.

\noindent Moreover, in the unconstrained case, that is $X = \rset^n$, and for objective
function $f(x) = g(Ax)$, we get from \eqref{Hoff_sigma} the following expression for the
quasi-strong convexity constant:
\begin{align}
\label{sigmamin} \kappa_f  = \sigma_g
\sigma_\text{min}^2(A).
\end{align}

\noindent Below we prove two extensions that belong to other functional
classes we have introduced in this paper.


\subsection{Composition of strongly convex function with
linear map plus a linear term for $X = \rset^n$ is in
$\mathcal{G}_{L_f,\kappa_f}^{}(X)$}

\noindent Let us now consider  the class of unconstrained optimization problems  (P), i.e. $X = \rset^n$,  having the form:
\begin{equation}
\label{generalcls_c} f^*=  \min_{x \in \rset^n} \; f(x) \; \equiv
g(Ax) + c^T x,
\end{equation}
i.e. the objective function is in the form $f(x) = g(Ax) + c^Tx$,
where $g$ is a smooth and strongly convex  function,  $A \in
\rset^{m \times n}$ is a nonzero general matrix and $c \in \rset^n$.
We prove in the  next theorem that this type of objective function
for problem \eqref{generalcls_c} belongs to the class
$\mathcal{G}_{L_f,\kappa_f}^{}$:

\begin{theorem}
\label{the_gAx+c}
Under the same assumptions as in Theorem \ref{the_gAx} with $X =
\rset^n$, the  objective function of the form  $f(x) = g(Ax) + c^T
x$ belongs to the class $\mathcal{G}_{L_f,\kappa_f}^{}(X)$, with
constants $L_f = L_g \|A\|^2$ and  $\kappa_f =
\frac{\sigma_g}{\theta^2(A,0)}$, where $\theta(A,0)$ is the Hoffman
constant for the optimal set $X^*$.
\end{theorem}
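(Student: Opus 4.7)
The plan is to mimic the proof of Theorem \ref{the_gAx} in the unconstrained setting, taking advantage of the fact that the linear term $c^T x$ drops out of any gradient difference. First, I would verify the Lipschitz constant: since $\nabla f(x) - \nabla f(y) = A^T(\nabla g(Ax) - \nabla g(Ay))$, the same chain of inequalities as in Theorem \ref{the_gAx} gives $L_f = L_g \|A\|^2$ with no modification.

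Next, I would show that $Ax^*$ is constant across the optimal set $X^*$. Given two minimizers $x_1^*, x_2^*$, convexity of $f$ forces $f$ to be constant on the segment joining them, so
\[
g\!\left(\tfrac{Ax_1^* + Ax_2^*}{2}\right) + c^T \tfrac{x_1^* + x_2^*}{2} = \tfrac{1}{2}\bigl(g(Ax_1^*) + c^T x_1^* + g(Ax_2^*) + c^T x_2^*\bigr).
\]
The linear terms cancel on both sides, reducing the identity to one purely in $g$, and $\sigma_g$-strong convexity of $g$ then forces $Ax_1^* = Ax_2^*$. Hence there exists a fixed $t^* \in \rset^m$ with $Ax^* = t^*$ for all $x^* \in X^*$; moreover, since the stationarity condition $A^T\nabla g(Ax) + c = 0$ depends only on $Ax$, the optimal set is exactly the affine subspace
\[
X^* = \{x \in \rset^n :\; Ax = t^*\}.
\]

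With this polyhedral (in fact affine) description of $X^*$, Hoffman's inequality (in the equality-only form, cf.\ \eqref{Hoff_sigma}) yields
\[
\|x - \bar{x}\| \leq \theta(A,0)\,\|Ax - A\bar{x}\| \qquad \forall x \in \rset^n,
\]
with $\theta(A,0) = 1/\sigma_{\min}(A)$. Now I would verify the quadratic gradient growth property \eqref{gvar} directly: because the $c$ terms cancel,
\[
\langle \nabla f(x) - \nabla f(\bar{x}),\, x-\bar{x}\rangle = \langle A^T(\nabla g(Ax) - \nabla g(A\bar{x})),\, x - \bar{x}\rangle = \langle \nabla g(Ax) - \nabla g(A\bar{x}),\, Ax - A\bar{x}\rangle.
\]
Applying the second form of strong convexity \eqref{sc_equiv} to $g$ gives the lower bound $\sigma_g \|Ax - A\bar{x}\|^2$, and combining with the Hoffman bound yields \eqref{gvar} with $\kappa_f = \sigma_g/\theta^2(A,0) = \sigma_g \sigma_{\min}^2(A)$.

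The only delicate step is step two, namely proving that $Ax^*$ is constant on $X^*$ in the presence of the linear term $c^T x$; everything else is a direct computation. This step is essentially the same observation as in Theorem \ref{the_gAx}, but one has to be careful to notice that the contribution of $c^T x$ is affine in $x$ and therefore cancels when one compares the midpoint value to the average of the endpoint values, so the argument reduces cleanly to strong convexity of $g$ applied to $Ax_1^*$ and $Ax_2^*$.
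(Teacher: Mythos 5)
Your proposal is correct and follows essentially the same route as the paper: establish $L_f = L_g\|A\|^2$, show $Ax^*$ is constant on $X^*$ so that $X^* = \{x:\, Ax = t^*\}$ (the paper gets the exact description via $c = -A^T\nabla g(t^*)$, you via the stationarity condition depending only on $Ax$ — the same observation), then combine the Hoffman bound $\|x-\bar{x}\| \le \theta(A,0)\|Ax - A\bar{x}\|$ with the second strong-convexity inequality for $g$ and the cancellation of the $c$ terms in the gradient difference to obtain \eqref{gvar} with $\kappa_f = \sigma_g/\theta^2(A,0)$. No gaps.
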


\begin{proof}
Since  $g$ is $\sigma_g$-strongly convex and with $L_g$-Lipschitz
continuous gradient, then by the same reasoning as in the proof of
Theorem \ref{the_gAx} we get that there exists unique vector $t^*$
such that $A x^* = t^*$ for all $x^* \in X^*$. Similarly, there
exists unique scalar $s^*$ such that $c^T x^* = s^*$ for all $x^*
\in X^*$. Indeed, for $x_1^*, x_2^* \in X^*$ we have:
\[ f^* = g(t^*) + c^T x_1^* = g(t^*) + c^T x_2^*,   \]
which implies that $c^T x_1^* = c^T x_2^*$. On the other hand, since
problem (P) is unconstrained, for any $x^* \in X^*$ we have:
\[ 0 = \nabla f(x^*) = A^T \nabla g(t^*) + c,  \]
which implies that $c^T x^* = -(\nabla g(t^*))^T A x^* = - (\nabla
g(t^*))^T t^*. $ Therefore, the set of optimal solutions $X^*$ is
described in this case by the following polyhedron:
\[ X^* =\{x^*: \;\; A x^* = t^* \}. \]
Then, there exists $\theta(A,0)>0$ such that the Hoffman inequality holds:
\[  \|x - \bar{x}\| \leq \theta(A,0) \|Ax - A \bar{x} \|
 \quad \forall x \in \rset^n.  \]
From the previous inequality and  strong convexity of $g$, we have:
\begin{align*}
 \frac{\sigma_g}{\theta^2(A,0)} \|x - \bar{x}\|^2 & \leq  \sigma_g \|
Ax - A \bar{x} \|^2   \overset{\eqref{sc_equiv}}{\leq} \langle \nabla g(Ax) - \nabla
g(A\bar{x}), Ax - A \bar{x} \rangle \\
& =  \langle A^T \nabla g(Ax) + c - A^T \nabla g(A\bar{x}) - c, x -  \bar{x} \rangle \\
& = \langle \nabla f(x) - \nabla f(\bar{x}), x - \bar{x} \rangle.
\end{align*}
Finally, we conclude that the inequality on the variation of gradients \eqref{gvar} holds with constant  $\kappa_f  = \frac{\sigma_g}{\theta^2(A,0)}$.  \qed
\end{proof}


\subsection{ Composition of strongly convex function with
linear map plus a linear term  is in
$\mathcal{F}_{L_f,\kappa_f}^{}(X_M)$}

\noindent Finally, let us now consider  the class of optimization problems  (P) of the form:
\begin{align}
\label{general_probl}
f^* = & \min_{x} f(x) \; \equiv g(Ax) + c^T x \\
      & \text{s.t.}: \quad x \in X \equiv \{x \in \rset^n: \; Cx \leq d \}, \nonumber
\end{align}
i.e. the objective function is in the form $f(x) = g(Ax) + c^Tx$,
where $g$ is a smooth and strongly convex  function,  $A \in
\rset^{m \times n}$ is a nonzero matrix and $c \in \rset^n$. We now
prove that  the objective function of problem \eqref{general_probl}
belongs to class $\mathcal{F}_{L_f,\kappa_f}^{}$, provided that some
boundedness assumption is imposed on~$f$.

\begin{theorem}
\label{the_gAx_cx} Under the same assumptions as in   Theorem
\ref{the_gAx}, the objective function $f(x) = g(Ax) + c^T x$ belongs
to the class $\mathcal{F}_{L_f,\kappa_f}^{}(X_M)$  for any constant
$M >0$ such that $X_M =\{x: \; x \in X, \;  f(x) - f^* \leq M \}$,
with constants $L_f = L_g \|A\|^2$ and  $\kappa_f =
\frac{\sigma_g}{\theta^2(A,c,C) \left(1 + M \sigma_g + 2 c_g^2
\right)}$, where $\theta(A,c,C)$ is the Hoffman constant for the
polyhedral optimal set $X^*$ and $c_g = \| \nabla  g(Ax^*) \|$, with
$x^* \in X^*$.
\end{theorem}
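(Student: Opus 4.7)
The plan is to follow the same blueprint as Theorems \ref{the_gAx} and \ref{the_gAx+c}: first identify the polyhedral structure of the optimal set, then invoke a Hoffman inequality to replace the distance to $X^{*}$ by a bound on certain residuals, and finally use strong convexity of $g$ together with the fact that $x \in X_M$ to control each residual by a multiple of $f(x)-f^{*}$.

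\textbf{Step 1 (Structure of $X^{*}$).} As in Theorem \ref{the_gAx}, convexity of $f$ combined with $\sigma_g$-strong convexity of $g$ forces $A x^{*}$ to be constant over $X^{*}$; denote this common value by $t^{*}$. Writing $f(x^{*}) = g(t^{*}) + c^{T} x^{*} = f^{*}$ for every $x^{*} \in X^{*}$ then shows $c^{T} x^{*} = s^{*}$ is constant as well. Hence
\[
X^{*} = \{x \in \rset^{n} : A x = t^{*}, \; c^{T} x = s^{*}, \; C x \leq d\},
\]
which is a nonempty polyhedron, so Hoffman's inequality yields a constant $\theta(A,c,C)>0$ with
\[
\|x - \bar{x}\|^{2} \leq \theta^{2}(A,c,C) \left( \|A x - t^{*}\|^{2} + (c^{T} x - s^{*})^{2} \right) \quad \forall x \in X,
\]
since $[Cx-d]_{+} = 0$ for feasible $x$.

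\textbf{Step 2 (Controlling $\|A(x-\bar{x})\|^{2}$).} Using $\sigma_g$-strong convexity of $g$ at $t^{*} = A\bar{x}$ and the identity $\nabla f(\bar{x}) = A^{T} \nabla g(t^{*}) + c$, I obtain
\[
f(x) - f^{*} \geq \langle \nabla f(\bar{x}), x - \bar{x} \rangle + \frac{\sigma_g}{2} \|A(x-\bar{x})\|^{2} \geq \frac{\sigma_g}{2} \|A(x-\bar{x})\|^{2},
\]
where the last step uses first-order optimality $\langle \nabla f(\bar{x}), x - \bar{x}\rangle \geq 0$. This gives the bound $\|A(x-\bar{x})\|^{2} \leq \frac{2}{\sigma_g}(f(x)-f^{*})$.

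\textbf{Step 3 (Controlling $(c^{T}(x-\bar{x}))^{2}$).} Substituting $c = \nabla f(\bar{x}) - A^{T}\nabla g(t^{*})$ gives
\[
c^{T}(x-\bar{x}) = \langle \nabla f(\bar{x}), x - \bar{x}\rangle - \langle \nabla g(t^{*}), A(x-\bar{x})\rangle.
\]
Applying $(a+b)^{2} \leq 2a^{2} + 2b^{2}$, Cauchy--Schwarz on the second term (producing $c_g^{2} = \|\nabla g(t^{*})\|^{2}$), and, for the first term, the key observation that on $X_M$ we have $\eta := \langle \nabla f(\bar{x}), x-\bar{x}\rangle \leq f(x) - f^{*} \leq M$, so that $\eta^{2} \leq M\,(f(x)-f^{*})$. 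Combining with Step 2 yields
\[
(c^{T}(x-\bar{x}))^{2} \leq 2M(f(x)-f^{*}) + \frac{4 c_g^{2}}{\sigma_g}(f(x)-f^{*}).
\]

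\textbf{Step 4 (Conclusion).} Adding the bounds from Steps 2 and 3 and inserting into the Hoffman inequality of Step 1 gives
\[
\|x-\bar{x}\|^{2} \leq \theta^{2}(A,c,C) \cdot \frac{2}{\sigma_g}\bigl(1 + M\sigma_g + 2 c_g^{2}\bigr)(f(x)-f^{*}),
\]
which rearranges to the quadratic functional growth \eqref{wsc} with the claimed $\kappa_f$. The main obstacle is Step 3: unlike the unconstrained case of Theorem \ref{the_gAx+c}, $c$ need not lie in the row space of $A$, so $c^{T}(x-\bar{x})$ is a genuinely independent residual; bounding $\langle\nabla f(\bar{x}), x-\bar{x}\rangle^{2}$ by $M(f(x)-f^{*})$ is precisely what forces restriction to the sublevel set $X_M$ and explains the appearance of $M$ in the constant.
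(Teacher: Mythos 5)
Your proposal is correct and follows essentially the same route as the paper's proof: the same polyhedral description of $X^*$ via $(t^*,s^*)$, the same Hoffman inequality restricted to feasible points, the bound $\|A(x-\bar{x})\|^2 \leq \tfrac{2}{\sigma_g}(f(x)-f^*)$ from strong convexity of $g$ plus optimality of $\bar{x}$, and the same decomposition of $c^T(x-\bar{x})$ with $(a+b)^2\leq 2a^2+2b^2$ and the sublevel-set bound $\langle\nabla f(\bar{x}),x-\bar{x}\rangle \leq f(x)-f^*\leq M$, yielding the identical constant $\kappa_f$.
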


\begin{proof}
From  the proof of Theorem \ref{the_gAx+c} it follows that there exist unique $t^*$ and $s^*$ such that the optimal set of \eqref{general_probl} is given as follows:
\[ X^*  =\{x^*: \; A x^* = t^*, \;  c^T x^* = s^*, \; C x^* \leq d \}. \]
From Hoffman inequality we have that there exists some positive constant depending on the matrices $A, C$ and $c$ describing the polyhedral set $X^*$, i.e. $\theta(A,C, c)>0$, such that:
\[  \|x - \bar{x}\| \leq \theta(A,c,C) \left \| \begin{bmatrix}
     Ax - t^* \\
     c^T x - s^*\\
     [Cx - d]_+
\end{bmatrix} \right \| \quad \forall x \in \rset^n,  \]
where recall that $\bar{x} = [x]_{X^*}$. Then, for any feasible $x$, i.e.
satisfying $C x \leq d$, we have:
\begin{align}
\label{Hoffman_gAxcx}
\|x - \bar{x}\|^2 \leq \theta^2(A,c,C) \left( \|Ax - A \bar{x} \|^2 + (c^T x - c^T \bar{x})^2 \right) \quad \forall x \in X.
\end{align}
Since $f(x) = g(Ax) + c^T x$ and $g$ is strongly convex,
it follows from \eqref{sc_equiv} that:
\begin{align*}
g(Ax) - g(A \bar{x}) & \geq
\langle \nabla g(A \bar{x}), Ax - A \bar{x}  \rangle +  \frac{\sigma_g}{2} \| Ax - A \bar{x}\|^2 \\
& = \langle A^T \nabla g(A \bar{x}) + c, x -  \bar{x}  \rangle  - \langle c, x -  \bar{x} \rangle +  \frac{\sigma_g}{2} \| Ax - A \bar{x}\|^2 \\
& =\langle  \nabla f(\bar{x}), x -  \bar{x}  \rangle  - \langle c, x -  \bar{x} \rangle +  \frac{\sigma_g}{2} \| Ax - A \bar{x}\|^2.
\end{align*}
Using that $\langle  \nabla f(\bar{x}), x -  \bar{x}  \rangle \geq 0$ for all $x \in X$, and  definition of $f$, we obtain:
\begin{align}
\label{ineq1}
f(x)  - f^* \geq  \frac{\sigma_g}{2} \|A x - A \bar{x}\|^2  \quad \forall x \in X.
\end{align}

\noindent It remains to bound $(c^T x - c^T \bar{x})^2$. It is easy to notice that $\theta(A,c,C) \geq 1/\|c\|$.  We also observe that:
\[  c^T x - c^T \bar{x}  =  \langle \nabla f(\bar{x}), x -  \bar{x}  \rangle - \langle \nabla g(A \bar{x}), Ax - A \bar{x}  \rangle.  \]
Since $f(x) - f^* \geq \langle \nabla f(\bar{x}), x -  \bar{x}  \rangle \geq 0$ for all $x \in X$, then we obtain:
\[  |c^T x - c^T \bar{x}| \leq  f(x) - f^*  + \|\nabla g(A \bar{x})\| \;  \| Ax - A \bar{x} \|, \]
 and then using inequality $(\alpha+\beta)^2 \leq 2 \alpha^2 + 2 \beta^2$ and  considering $ f(x) - f^* \leq M$,  $c_g = \| \nabla  g(t^*) \|$ and \eqref{ineq1}, we get:
\begin{align*}
(c^T x - c^T \bar{x})^2 & \leq  2 (f(x) - f^*)^2  + 2 c_g^2 \| Ax - A \bar{x} \|^2 \\
& \leq  \left(2 M   + \frac{4 c_g^2}{\sigma_g} \right) \left( f(x) - f^* \right)  \quad \forall x \in X, \; f(x) - f^* \leq M.
\end{align*}
Finally, we conclude that:
\[ \| x  - \bar{x} \|^2   \leq  \frac{2 \theta^2(A,c,C)}{\sigma_g} \left( 1 + M \sigma_g + 2 c_g^2 \right) \left( f(x)  - f^* \right) \quad \forall x \in X, \; f(x) - f^* \!\leq\! M. \]
This proves the statement of the theorem.  \qed
\end{proof}

\noindent Typically, for feasible  descent methods we take $M =
f(x^0) - f^*$ in the previous theorem, where $x^0$ is the starting
point of the method. Moreover, if $X$ is bounded, then  there exists
always $M$  such that $f(x) - f^* \leq M$ for all $x \in X$. Note
that the requirement $f(x) - f^* \leq M$ for having a second order
growth inequality \eqref{wsc} for $f$ is necessary, as shown in the
following example:

\begin{example}
\label{example_S}
\noindent Let us consider  problem (P) in the form \eqref{general_probl} given by:
\[ \min_{x \in \rset^2_+} \frac{1}{2}x_1^2 + x_2  \]
which has $X^*=\{0\}$ and $f^* =0$. Clearly, there is no constant
$\kappa_f < \infty$ such that the following inequality to be valid:
\[  f(x) \geq \frac{\kappa_f}{2} \|x\|^2  \quad \forall x \geq 0.  \]
We can take for example $x_1 = 0$ and $x_2 \to + \infty$. However, for any
$M>0$ there exists $\kappa_f(M) < \infty$ satisfying the above
inequality for all $x \geq 0$ with  $f(x) \leq M$.  For example, we can take:
\[  \kappa_f(M) = \min\{1, \; \frac{1}{M} \}  \quad \Rightarrow \quad \mu_f(M) =\frac{1}{M} \quad \text{for} \quad M \geq 1. \]
Note that for this example $\theta(A,c,C) = \frac{1}{\|c\|} = 1$.  \qed
\end{example}

\noindent In the sequel we analyze  the convergence rate of  several  first order
methods for solving convex constrained optimization problem (P) having the objective function
in one of the functional classes introduced in this paper.


\section{Linear convergence of first order methods}
\noindent We show in the next sections that a broad  class of  first
order methods, covering important particular algorithms, such as
projected gradient, fast gradient, random/cyclic coordinate descent,
extragradient descent and matrix splitting,   have linear
convergence rates on optimization problems (P), whose objective
function satisfies  one of the non-strongly convex  conditions given
above.


\subsection{Projected gradient method (GM)}
In this section we consider the  projected  gradient algorithm with
variable step size:
\begin{center}
\framebox{
\parbox{6.1cm}{
\begin{center}
\textbf{ Algorithm (GM) }
\end{center}
{Given $x^0  \in X$   for $k \geq 1$ do:}
\[1.\;\; \text{Compute} \;\; x^{k+1} = \left [ x^k - \alpha_k \nabla f(x^k) \right ]_X \]  }}
\end{center}
where $\alpha_k$ is a step size such that $\alpha_k \in
[\bar{L}_f^{-1}, \; L_f^{-1}]$, with $\bar{L}_f \geq L_f$.

\subsubsection{Linear convergence of (GM) for $q\mathcal{S}_{L_f,\kappa_f}^{}$}
Let us show that the projected gradient method converges linearly on
optimization problems (P) whose objective functions belong to the
class $q\mathcal{S}_{L_f,\kappa_f}^{}$.

\begin{theorem}
\label{th_gm_W}  Let  the optimization  problem (P) have the
objective function  belonging to the class
$q\mathcal{S}_{L_f,\kappa_f}^{}$. Then, the sequence $x^k$ generated
by the projected  gradient method (GM) with constant step size
$\alpha_k = 1/L_f$  on (P) converges linearly to some optimal point
in $X^*$ with the rate:
\begin{align}
\label{lin_convW} \|x^{k} - \bar{x}^{k}\|^2  \leq \left( \frac{1 - \mu_f}{1 + \mu_f} \right)^{k} \|x^{0} - \bar{x}^{0}\|^2, \quad \text{where} \quad \mu_f = \frac{\kappa_f}{L_f}.
\end{align}
\end{theorem}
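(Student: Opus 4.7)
\textbf{Proof sketch for Theorem \ref{th_gm_W}.} The plan is to derive a one-step contraction of the form $\|x^{k+1}-\bar{x}^{k+1}\|^2 \leq \frac{1-\mu_f}{1+\mu_f}\|x^{k}-\bar{x}^{k}\|^2$ and iterate. The ingredients are the descent inequality from Lipschitz continuity of $\nabla f$, the optimality condition for the projection that defines $x^{k+1}$, the quasi-strong convexity inequality \eqref{wsc_basic} applied at $x^k$ with projection $\bar{x}^k$, and finally the quadratic functional growth \eqref{wsc} applied at the new iterate $x^{k+1}$ (which holds by the inclusion $q\mathcal{S}_{L_f,\kappa_f}(X)\subseteq \mathcal{F}_{L_f,\kappa_f}(X)$ from Theorem \ref{th_qap_sog}).

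First I would start from \eqref{lipg2} evaluated at $x^{k+1}$ and $x^k$ to obtain
$f(x^{k+1}) \leq f(x^k) + \langle \nabla f(x^k), x^{k+1}-x^k\rangle + \tfrac{L_f}{2}\|x^{k+1}-x^k\|^2$,
and split $x^{k+1}-x^k = (\bar{x}^k - x^k) + (x^{k+1}-\bar{x}^k)$. Next, since $x^{k+1}$ is the Euclidean projection of $x^k - (1/L_f)\nabla f(x^k)$ onto $X$ and $\bar{x}^k \in X$, the first-order optimality condition for the projection yields
$\langle \nabla f(x^k), x^{k+1}-\bar{x}^k\rangle \leq L_f\langle x^k-x^{k+1}, x^{k+1}-\bar{x}^k\rangle$.
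Combining this with the polarization identity
$2\langle x^k-x^{k+1},\, x^{k+1}-\bar{x}^k\rangle + \|x^k-x^{k+1}\|^2 = \|x^k-\bar{x}^k\|^2 - \|x^{k+1}-\bar{x}^k\|^2$
collapses the three quadratic terms into $\tfrac{L_f}{2}(\|x^k-\bar{x}^k\|^2 - \|x^{k+1}-\bar{x}^k\|^2)$.

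At this point, applying \eqref{wsc_basic} at $x^k$ gives $f(x^k)+\langle \nabla f(x^k),\bar{x}^k-x^k\rangle \leq f^* - \tfrac{\kappa_f}{2}\|x^k-\bar{x}^k\|^2$, and after reorganizing I expect to arrive at the clean estimate
\begin{equation*}
f(x^{k+1}) - f^* \;\leq\; \tfrac{L_f-\kappa_f}{2}\|x^k-\bar{x}^k\|^2 \;-\; \tfrac{L_f}{2}\|x^{k+1}-\bar{x}^k\|^2.
\end{equation*}
The main obstacle, which is actually also the source of the sharper rate $\frac{1-\mu_f}{1+\mu_f}$ (rather than the naive $1-\mu_f$ one would get by simply dropping the last term), is to squeeze additional information out of the leftover $f(x^{k+1})-f^*$ on the left side. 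For this I would invoke Theorem \ref{th_qap_sog} to use $f(x^{k+1})-f^*\geq \tfrac{\kappa_f}{2}\|x^{k+1}-\bar{x}^{k+1}\|^2$, and the projection inequality $\|x^{k+1}-\bar{x}^{k+1}\|\leq \|x^{k+1}-\bar{x}^k\|$. Substituting and grouping the terms $(\kappa_f+L_f)\|x^{k+1}-\bar{x}^{k+1}\|^2 \leq (L_f-\kappa_f)\|x^k-\bar{x}^k\|^2$ yields the one-step contraction with factor $(L_f-\kappa_f)/(L_f+\kappa_f) = (1-\mu_f)/(1+\mu_f)$. Iterating from $k=0$ completes the proof; linear convergence of $\{x^k\}$ to some point in $X^*$ follows since the sequence is then Cauchy and any limit must lie in $X^*$ by continuity of $f$ and the projection.
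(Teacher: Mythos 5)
Your proposal is correct and follows essentially the same route as the paper: the same ingredients (descent inequality, projection optimality at $\bar{x}^k$, quasi-strong convexity \eqref{wsc_basic} at $x^k$, then the quadratic growth \eqref{wsc} at $x^{k+1}$ together with $\|x^{k+1}-\bar{x}^{k+1}\|\leq\|x^{k+1}-\bar{x}^k\|$) combined in the same way, and your intermediate estimate $f(x^{k+1})-f^*\leq \frac{L_f-\kappa_f}{2}\|x^k-\bar{x}^k\|^2-\frac{L_f}{2}\|x^{k+1}-\bar{x}^k\|^2$ is exactly the paper's key inequality rescaled by $L_f/2$, so the resulting contraction factor $(1-\mu_f)/(1+\mu_f)$ is obtained identically.
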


\begin{proof}
From Lipschitz continuity of the gradient   of $f$ given in
\eqref{lipg2} we have:
\begin{align}
\label{lip_pr} f(x^{k+1}) \leq f(x^k) + \langle \nabla f(x^k),
x^{k+1} - x^k\rangle + \frac{L_f}{2} \|x^{k+1} - x^k\|^2.
\end{align}
The optimality conditions for $x^{k+1}$ are:
\begin{align}
\label{opt_gm} \langle  x^{k+1} - x^k + \alpha_k \nabla f(x^k), x -
x^{k+1} \rangle \geq 0 \quad \forall x \in X.
\end{align}
Taking $x = x^k$ in  \eqref{opt_gm}  and replacing the corresponding
expression in \eqref{lip_pr}, we~get:
\[  f(x^{k+1}) \leq  f(x^k) + (\frac{L_f}{2} - \frac{1}{\alpha_k} ) \|x^{k+1} - x^k\|^2
 \overset{\alpha_k \leq L_f^{-1}}{\leq}  f(x^k) - \frac{L_f}{2} \|x^{k+1} - x^k\|^2. \]

\noindent Further, we have:
\begin{align*}
\|x^{k+1} - \bar{x}^k\|^2 & = \|x^{k} - \bar{x}^k\|^2   + 2 \langle x^k - \bar{x}^k,  x^{k+1} - x^k \rangle + \|x^{k+1} - x^k\|^2 \\
& = \|x^{k} - \bar{x}^k \|^2   + 2 \langle x^{k+1} - \bar{x}^k,  x^{k+1} - x^k \rangle - \|x^{k+1} - x^k\|^2 \\
& \overset{\eqref{opt_gm}}{\leq} \|x^{k} - \bar{x}^k\|^2   + 2 \alpha_k \langle \nabla f(x^{k}), \bar{x}^k - x^{k+1} \rangle - \|x^{k+1} - x^k\|^2\\
& = \|x^{k} \!- \bar{x}^k \|^2   \!+\! 2 \alpha_k \langle \nabla f(x^{k}), \bar{x}^k \!- x^k \rangle \!+\! 2 \alpha_k \langle \nabla f(x^{k}), x^k \!- x^{k+1} \rangle \\
& \qquad \qquad \qquad \qquad    - \|x^{k+1} - x^k\|^2 \\
& \overset{\eqref{wsc_basic}}{\leq} \|x^{k} - \bar{x}^k \|^2  + 2 \alpha_k  \Big( f^* - f(x^k) - \frac{\kappa_f}{2} \|x^k - \bar{x}^k\|^2 \Big) \\
& \quad - 2 \alpha_k \Big( \langle \nabla f(x^{k}),   x^{k+1} - x^k \rangle  + \frac{1}{2 \alpha_k} \|x^{k+1} - x^k\|^2 \Big) \\
& = (1 - \alpha_k \kappa_f) \|x^k - \bar{x}^k\|^2  +  2 \alpha_k  f^* \\
& \quad -  2 \alpha_k  \Big( f(x^k) + \langle \nabla f(x^{k}),   x^{k+1} - x^k \rangle  +  \frac{1}{2 \alpha_k}\|x^{k+1} - x^k\|^2 \Big) \\
& \overset{L_f \leq 1/\alpha_k}{\leq} (1 - \alpha_k \kappa_f) \|x^k - \bar{x}^k\|^2  +  2 \alpha_k  f^* \\
& \quad -  2 \alpha_k  \Big( f(x^k) + \langle \nabla f(x^{k}),   x^{k+1} - x^k \rangle  +  \frac{L_f}{2}\|x^{k+1} - x^k\|^2 \Big) \\
& \overset{\eqref{lip_pr}}{\leq} (1 - \alpha_k \kappa_f) \|x^{k} - \bar{x}^k\|^2 - 2 \alpha_k (f(x^{k+1}) - f^*).
\end{align*}
Since \eqref{wsc_basic} holds for the function $f$, then from Theorem \ref{th_qap_sog} we also have that
\eqref{wsc} holds  and therefore $f(x^{k+1}) - f^* \geq \frac{\kappa_f}{2} \| x^{k+1} -\bar{x}^{k+1}\|^2$. Combining the last inequality with the previous one and taking into account that $\|x^{k+1} - \bar{x}^{k+1}\| \leq \|x^{k+1} - \bar{x}^k\|$ , we get:
\[  \|x^{k+1} - \bar{x}^{k+1}\|^2 \leq  (1 - \alpha_k \kappa_f) \|x^{k} - \bar{x}^{k}\|^2 - \alpha_k \kappa_f
\|x^{k+1} - \bar{x}^{k+1}\|^2, \] or equivalently
\begin{align}
\label{lin_convw}
\|x^{k+1} - \bar{x}^{k+1}\|^2 \leq \frac{1 - \alpha_k \kappa_f}{1
+  \alpha_k \kappa_f} \cdot  \|x^{k} - \bar{x}^{k}\|^2.
\end{align}

\noindent However, the best  decrease is obtained for the constant step
size $\alpha_k = 1/L_f$ and using the definition of the condition number $\mu_f = \kappa_f/L_f$, we get:
\begin{align*}
 \|x^{k+1} - \bar{x}^{k+1}\|^2  \leq \frac{1 - \mu_f}{1 + \mu_f} \cdot \|x^{k} - \bar{x}^{k}\|^2.
\end{align*}
This proves our statement.   \qed
\end{proof}

\noindent Based on Theorem  \ref{th_gm_W} we can easily derive
linear convergence for the projected gradient algorithm (GM) in
terms of the function values:
\begin{align*}
f(x^{k+1}) & \overset{\eqref{lip_pr}}{\leq}  f(x^k) + \langle \nabla f(x^k),
x^{k+1} - x^k\rangle + \frac{L_f}{2} \|x^{k+1} - x^k\|^2 \\
& \overset{L_f \leq 1/\alpha_k}{\leq} \min_{x \in X} f(x^k) + \langle
\nabla f(x^k), x - x^k\rangle + \frac{1}{2\alpha_k} \|x^k - x\|^2 \\
& \leq  \min_{x \in X} f(x) + \frac{1}{2\alpha_k} \|x^k - x\|^2  \leq f(\bar{x}^k)  + \frac{1}{2\alpha_k} \|x^k - \bar{x}^k\|^2 \\
& \overset{\eqref{lin_convW}}{\leq} f^* +  \frac{\bar{L}_f}{2}
\left( \frac{1 - \mu_f}{1 + \mu_f} \right)^{k} \|x^{0} - \bar{x}^{0}\|^2.
\end{align*}
Finally, the best convergence rate is obtained for constant step
size $\alpha_k = 1/L_f$:
\begin{align}
\label{lin_conv_W_uc} f(x^k)  - f^* \overset{\eqref{lin_convS}}{\leq}
\frac{L_f \|x^{0} - \bar{x}^{0}\|^2}{2} \left( \frac{1 - \mu_f}{1 + \mu_f} \right)^{k-1} \qquad \forall k \geq 1.
\end{align}
However, this rate is not continuous as $\mu_f \to 0$. For simplicity, let us assume  constant step size $\alpha_k = 1/L_f$,  and then, using that (GM) is a descent method, i.e.  $f(x^{k}) - f^* \leq  f(x^{k-j}) - f^*$ for all $j < k$ and iterating the main inequality from the proof of Theorem \ref{th_gm_W},  we obtain:
\begin{align*}
\|x^k - \bar{x}^k\|^2  &\leq (1 - \mu_f) \|x^{k-1} - \bar{x}^{k-1}\|^2 - \frac{2}{L_f} \left( f(x^k) - f^* \right ) \\
& \leq  (1 - \mu_f)^k \|x^{0} - \bar{x}^{0}\|^2 - \frac{2}{L_f} \sum_{j=0}^k (1-\mu_f)^j (f(x^{k-j}) - f^*) \\
& \leq (1 - \mu_f)^k \|x^{0} - \bar{x}^{0}\|^2  - \frac{2}{L_f} \left( f(x^{k}) - f^* \right) \sum_{j=0}^k (1-\mu_f)^j.
\end{align*}
Finally,  we get linear convergence in terms of the
function values:
\begin{align}
\label{lin_conv_fW} f(x^k)  - f^* \leq
\frac{  L_f \|x^{0} - \bar{x}^{0}\|^2}{2} \cdot \frac{\mu_f}{(1-\mu_f)^{-k} - 1}.
\end{align}
Since $(1 + \alpha)^k \to 1 + \alpha k$ as $\alpha \to 0$, then we see that:
\[  \frac{\mu_f}{(1-\mu_f)^{-k} - 1} \leq  \frac{1}{k}  \quad \text{as} \quad \mu_f \to 0, \]
and thus from  \eqref{lin_conv_fW} we recover the classical
sublinear rate for (GM) as $\mu_f \to 0$:
\[ f(x^k)  - f^* \leq
\frac{  L_f \|x^{0} - \bar{x}^{0}\|^2}{2k}  \quad \text{as} \quad
\mu_f \to 0. \]




\subsubsection{Linear convergence of (GM) for $\mathcal{F}_{L_f,\kappa_f}^{}$}
We now show that the projected gradient method converges linearly on
optimization problems (P) whose objective functions belong to the
class $\mathcal{F}_{L_f,\kappa_f}^{}$.

\begin{theorem}
\label{th_gm_S} Let  optimization  problem (P) have objective
function  belonging to the class $\mathcal{F}_{L_f,\kappa_f}^{}$.
Then, the sequence $x^k$ generated by the projected  gradient method
(GM) with constant step size $\alpha_k = 1/L_f$ on (P) converges
linearly to some optimal point in $X^*$ with the rate:
\begin{align}
\label{lin_convS} \|x^{k} - \bar{x}^{k}\|^2  \leq \left( \frac{1}{1 + \mu_f} \right)^{k} \|x^{0} - \bar{x}^{0}\|^2, \quad \text{where} \quad \mu_f = \frac{\kappa_f}{L_f}.
\end{align}
\end{theorem}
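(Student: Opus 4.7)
My plan is to adapt the argument used for Theorem \ref{th_gm_W}, but to bypass the need for the quasi-strong convexity inequality \eqref{wsc_basic} by working directly with the weaker quadratic functional growth property \eqref{wsc}. The main vehicle will be the gradient mapping inequality \eqref{gmap}: applied at $x = x^k$ (so that $x^+ = x^{k+1}$ and $g(x^k) = L_f(x^k - x^{k+1})$), and combined with the polarization identity $2\langle a, b\rangle = \|a+b\|^2 - \|a\|^2 - \|b\|^2$, it can be rewritten as the three-point inequality
\[
f(y) + \frac{L_f}{2}\|y - x^k\|^2 \;\geq\; f(x^{k+1}) + \frac{L_f}{2}\|y - x^{k+1}\|^2 \qquad \forall y \in X.
\]

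The next step is to specialize this inequality at $y = \bar{x}^k \in X^*$, where $f(\bar{x}^k) = f^*$, which gives
\[
\frac{L_f}{2}\|x^k - \bar{x}^k\|^2 \;\geq\; \bigl(f(x^{k+1}) - f^*\bigr) + \frac{L_f}{2}\|\bar{x}^k - x^{k+1}\|^2.
\]
Two further ingredients then close the argument: the quadratic functional growth \eqref{wsc} applied at $x^{k+1}$, namely $f(x^{k+1}) - f^* \geq \frac{\kappa_f}{2}\|x^{k+1} - \bar{x}^{k+1}\|^2$, and the projection inequality $\|\bar{x}^k - x^{k+1}\| \geq \|\bar{x}^{k+1} - x^{k+1}\|$, valid because $\bar{x}^{k+1}$ is the closest point of $X^*$ to $x^{k+1}$. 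Plugging these into the display above yields $L_f\|x^k - \bar{x}^k\|^2 \geq (L_f + \kappa_f)\|x^{k+1} - \bar{x}^{k+1}\|^2$, i.e.\ a per-iteration contraction factor of $1/(1+\mu_f)$; iterating over $k$ then produces \eqref{lin_convS}.

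The main obstacle I anticipate is the clean derivation of the three-point inequality from \eqref{gmap}. While the algebra is routine, it does require carefully rewriting the scalar term $\langle g(x^k),\, y - x^k\rangle$ in terms of the three squared norms $\|y - x^k\|^2$, $\|y - x^{k+1}\|^2$ and $\|x^k - x^{k+1}\|^2$, so that the leftover $\frac{L_f}{2}\|x^k - x^{k+1}\|^2$ exactly cancels the $\frac{1}{2L_f}\|g(x^k)\|^2$ term from \eqref{gmap}. After that, the scheme is structurally simpler than the proof of Theorem \ref{th_gm_W}: in contrast to the quasi-strong convex case, here we never need to invoke a growth-type condition at the current iterate $x^k$, only at the new iterate $x^{k+1}$, which is exactly what makes the weaker assumption \eqref{wsc} sufficient, at the cost of the slightly slower rate $1/(1+\mu_f)$ instead of $(1-\mu_f)/(1+\mu_f)$.
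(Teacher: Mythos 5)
Your proposal is correct and follows essentially the same route as the paper: the three-point inequality you derive from the gradient mapping property \eqref{gmap} via polarization is exactly the inequality $\|x^{k+1}-x\|^2 \leq \|x^k-x\|^2 - 2\alpha_k\bigl(f(x^{k+1})-f(x)\bigr)$ (with $\alpha_k = 1/L_f$) that the paper establishes directly from the projection optimality conditions \eqref{opt_gm}, the descent bound \eqref{lip_pr} and convexity. From there both arguments coincide: evaluate at $\bar{x}^k$, use $\|x^{k+1}-\bar{x}^{k+1}\|\leq\|x^{k+1}-\bar{x}^k\|$ and the growth condition \eqref{wsc} at $x^{k+1}$ to obtain the contraction factor $1/(1+\mu_f)$.
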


\begin{proof}
Using similar arguments as in the previous Theorem  \ref{th_gm_W}, we have:
\begin{align*}
\|x^{k+1} - x\|^2 & = \|x^{k} - x\|^2   + 2 \langle x^k - x,  x^{k+1} - x^k \rangle + \|x^{k+1} - x^k\|^2 \\
& = \|x^{k} - x\|^2   + 2 \langle x^{k+1} - x,  x^{k+1} - x^k \rangle - \|x^{k+1} - x^k\|^2 \\
& \overset{\eqref{opt_gm}}{\leq} \|x^{k} - x\|^2   + 2 \alpha_k \langle \nabla f(x^{k}),
x - x^{k+1} \rangle - \|x^{k+1} - x^k\|^2\\
& \leq \|x^{k} - x\|^2  - 2 \alpha_k  \Big(  \langle \nabla
f(x^{k}), x^{k+1} - x \rangle + \frac{L_f}{2} \|x^{k+1} - x^k\|^2 \\
& \qquad \qquad \qquad \qquad \qquad \qquad \qquad \quad + (\frac{1}{2\alpha_k} - \frac{L_f}{2}) \|x^{k+1} - x^k\|^2 \Big )\\
& =  \|x^{k} - x\|^2 + (L_f \alpha_k -1) \|x^{k+1} - x^k\|^2 \\
&  \; -2\alpha_k \!\left(\! \langle \nabla f(x^{k}), x_k \!-\! x
\rangle \!+\! \langle \nabla f(x^{k}), x^{k+1} \!-\! x_k \rangle
\!+\! \frac{L_f}{2}
\|x^{k+1} \!-\! x^k\|^2 \!\right)\\
&\overset{\eqref{lip_pr}}{\leq}
\|x^{k} - x\|^2
 + (L_f \alpha_k -1) \|x^{k+1} - x^k\|^2 \\
& \qquad \qquad   +  2 \alpha_k (f(x) - f(x^k))  + 2 \alpha_k
(f(x^k) -
f(x^{k+1})) \\
& \overset{\alpha_k \leq L_f^{-1}}{\leq}  \|x^{k} -
x\|^2   - 2 \alpha_k (f(x^{k+1}) - f(x)) \qquad \forall x \in X.
\end{align*}
Taking now in the previous relations $x =\bar{x}^{k}$, using
$\|x^{k+1} - \bar{x}^{k+1}\| \leq \|x^{k+1} - \bar{x}^k\|$ and the quadratic functional
growth  of $f$  \eqref{wsc}, we get:
\[  \|x^{k+1} - \bar{x}^{k+1}\|^2 \overset{\eqref{wsc}}{\leq} \|x^{k} - \bar{x}^{k}\|^2 - \kappa_f  \alpha_k  \|x^{k+1} - \bar{x}^{k+1}\|^2 \]
or equivalently
\begin{align}
\label{lin_convS1}
\|x^{k+1} - \bar{x}^{k+1}\|^2 \leq \frac{1}{1
+  \kappa_f \alpha_k} \|x^{k} - \bar{x}^{k}\|^2.
\end{align}

\noindent However, the best  decrease is obtained for the constant step
size $\alpha_k = 1/L_f$ and using the definition of the condition number $\mu_f = \kappa_f/L_f$, we get:
\begin{align*}
 \|x^{k+1} - \bar{x}^{k+1}\|^2  \leq \frac{1}{1 +   \mu_f} \|x^{k} - \bar{x}^{k}\|^2.
\end{align*}
\noindent Thus,  we  have obtained the  linear convergence rate  for
(GM) with constant step size $\alpha_k = 1/L_f$ from the theorem.
\qed
\end{proof}

\noindent Using similar arguments as for \eqref{lin_conv_W_uc} and
combining with  \eqref{lin_convS1} we can also derive linear
convergence of (GM) in terms of the function values:
\begin{align*}
f(x^{k+1}) - f^* & \leq \frac{1}{2\alpha_k} \|x^{k} -
\bar{x}^{k}\|^2 \overset{\eqref{lin_convS1}}{\leq}
\frac{1}{2\alpha_k} \left( \frac{1}{1 +  \kappa_f \alpha_k}
\right) \|x^{k-1} - \bar{x}^{k-1}\|^2,
\end{align*}
and the best convergence rate is obtained for constant step size
$\alpha_k = 1/L_f$:
\begin{align}
\label{lin_conv_f} f(x^k)  - f^* \overset{\eqref{lin_convS}}{\leq}
\frac{L_f \|x^{0} - \bar{x}^{0}\|^2}{2} \left( \frac{1}{1 + \mu_f} \right)^{k-1} \qquad \forall k \geq 1.
\end{align}
However, this rate is not continuous as $ \mu_f \to 0$.   We can
interpolate between the right hand side terms in \eqref{lin_conv_Lf}
and \eqref{lin_conv_f} to obtain convergence rates in terms of
function values of the form:
\begin{align*}
f(x^k)  - f^* & \leq  \frac{  L_f \|x^{t} - \bar{x}^{t}\|^2}{2(k-t)}
 \leq  \frac{  L_f \|x^{0} - \bar{x}^{0}\|^2}{2(k-t)}
\frac{1}{(1+\mu_f)^{t}}  \quad \forall t=0:k-1,
\end{align*}
or equivalently
\begin{align*}
f(x^k)  - f^* & \leq   \frac{  L_f \|x^{0} - \bar{x}^{0}\|^2}{2}
 \min_{t=0:k-1} \frac{1}{(1+\mu_f)^{t}(k-t)}.
\end{align*}

\noindent Finally, in the next theorem we establish necessary and
sufficient conditions  for linear convergence of the gradient method
(GM).
\begin{theorem}
On the class of optimization problems (P) the sequence generated by
the  gradient method (GM) with constant step size  is converging
linearly to some optimal point in $X^*$  if and only if the
objective function $f$ satisfies the quadratic functional growth \eqref{wsc},
i.e $f$ belongs to the functional  class $\mathcal{F}_{L_f,\kappa_f}^{}$.
\end{theorem}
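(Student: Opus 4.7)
The plan is to split the biconditional into its two directions. The sufficiency direction, from $f \in \mathcal{F}_{L_f,\kappa_f}^{}$ to linear convergence of (GM), has essentially already been established in Theorem~\ref{th_gm_S}: with constant step size $\alpha_k = 1/L_f$ we obtained the one-step contraction $\|x^{k+1} - \bar{x}^{k+1}\|^2 \leq \frac{1}{1+\mu_f}\|x^{k} - \bar{x}^{k}\|^2$, and this immediately implies linear convergence of the sequence $\{x^k\}$ to some point of $X^*$. Hence only the converse direction requires new work.

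For the necessity direction I would first fix the precise reading of linear convergence for (GM) with constant step size $1/L_f$: there exists $\beta \in (0,1)$ such that, for every starting point $x^0 \in X$, the generated sequence satisfies $\|x^{k+1} - \bar{x}^{k+1}\| \leq \beta \|x^{k} - \bar{x}^{k}\|$ for all $k \geq 0$. This is the natural notion since the step size is constant and the iteration is deterministic; a standard argument (exploiting the fact that for (GM) with step size $1/L_f$ the distance $\|x^k - \bar{x}^k\|$ is monotonically nonincreasing) allows one to extract such a uniform one-step geometric contraction even from weaker-looking rates of the form $\|x^k - x^*\| \leq C \rho^k$.

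Once this one-step contraction is available, I would apply it at iteration $k=0$ with an arbitrary initial point $x^0 = x \in X$. Since $x^{1} = [x - (1/L_f)\nabla f(x)]_X = x^+$ and $\bar{x}^{1} = \bar{x}^+$, the contraction becomes $\|x^+ - \bar{x}^+\| \leq \beta \|x - \bar{x}\|$ for every $x \in X$. This is precisely the hypothesis of Theorem~\ref{th_necesary_linconv}, which then yields that $f$ satisfies the quadratic functional growth \eqref{wsc} on $X$ with constant $\kappa_f = L_f(1-\beta)^2 > 0$; in other words, $f \in \mathcal{F}_{L_f,\kappa_f}^{}$.

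The main obstacle here is not analytic but definitional: one needs to pin down the exact sense in which (GM) is assumed to converge linearly so as to produce the uniform one-step contraction required by Theorem~\ref{th_necesary_linconv}. Once that extraction is done, the theorem reduces to a direct combination of Theorems~\ref{th_gm_S} and \ref{th_necesary_linconv}, without any additional calculations.
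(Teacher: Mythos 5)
Your proposal matches the paper's own proof: the paper establishes sufficiency by invoking Theorem~\ref{th_gm_S} (via the contraction \eqref{lin_convS1}) and necessity by invoking Theorem~\ref{th_necesary_linconv}, exactly as you do. Your additional discussion of how to read ``linear convergence'' as a uniform one-step contraction is a reasonable (and slightly more explicit) handling of a point the paper leaves implicit, but the argument is essentially identical.
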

\begin{proof}
The fact that linear convergence of the gradient method implies $f$
satisfying the second order growth property  \eqref{wsc} follows
from  Theorem \ref{th_necesary_linconv}. The other implication
follows from  Theorem \ref{th_gm_S}, eq.  \eqref{lin_convS1}. \qed
\end{proof}



\subsection{Fast gradient method (FGM)}
In this section we consider the following fast gradient algorithm,
which is a version of Nesterov's optimal gradient method
\cite{Nes:04}:
\begin{center}
\framebox{
\parbox{9.1cm}{
\begin{center}
\textbf{ Algorithm {\bf (FGM)} }
\end{center}
{Given $x^0 = y^0 \in X$, for $k\geq 1$ do:}
\begin{enumerate}
\item Compute ${x}^{k+1}=\left[y^k -  \frac{1}{L_f} \nabla f(y^k)\right]_X$ and
\item $y^{k+1} = x^{k+1} + \beta_k \left(x^{k+1} - x^{k}\right)$
\end{enumerate}
}}
\end{center}
for appropriate choice of the parameter $\beta_k >0$ for all $k \geq 0$.

\subsubsection{Linear convergence of (FGM)  for $q\mathcal{S}_{L_f,\kappa_f}^{}$.}
When the objective function $f \in q\mathcal{S}_{L_f,\kappa_f}^{}(X)$
we take the following expression for the parameter $\beta_k$:
\[  \beta_k = \frac{\sqrt{L_f} - \sqrt{\kappa_f}}{ \sqrt{L_f} + \sqrt{\kappa_f}} \quad \forall k \geq 0. \]

\noindent First of all we can easily observe that if $f \in
q\mathcal{S}_{L_f,\kappa_f}^{}(X)$, then the gradient mapping $g(x)$
satisfies the following inequality:
\begin{align}
\label{prop_gmap1}
f^* \geq f(x^+) + \langle g(x), \bar{x} -x \rangle + \frac{1}{2L_f} \|g(x)\|^2 + \frac{\kappa_f}{2}
 \| \bar{x} - x\| \equiv  q_{L_f,\kappa_f}(\bar{x},x)
\end{align}
for all $x \in \rset^n$ (recall that $\bar{x} = [x]_{X^*}$  and $x^+
= [x - 1/L_f \nabla f(x)]_X$). The convergence proof follows similar
steps as in  \cite{Nes:04}[Section 2.2.4].

\begin{lemma}
\label{lemma_fg1} Let  optimization  problem (P) have the objective
function $f$ belonging to the class $q\mathcal{S}_{L_f,\kappa_f}^{}$
and an arbitrary sequence $\{y^k\}_{k\geq 0}$  satisfying $\bar{y}^k
= [y^k]_{X^*} = y^*$ for all $k \geq 0$. Define an initial function:
\[ \phi_0(x) = \phi_0^* + \frac{\gamma_0}{2}\| x - v^0 \|^2, \quad \text{where} \;\; \gamma_0= \kappa_f, \;\; v^0=y^0 \; \text{and} \;\;  \phi_0^* = f(y^0), \]
and a sequence $\{\alpha_k\}_{k\geq 0}$ satisfying $\alpha_k \in (0,\; 1)$. Then, the following two sequences, iteratively defined as:
\begin{align}
\label{construction_fg}
& \lambda_{k+1} = (1 - \alpha_k) \lambda_k, \quad \text{with} \quad \lambda_0=1, \nonumber \\
&\phi_{k+1}(x) = (1 - \alpha_k) \phi_k(x)  \\
& \qquad \qquad \quad + \alpha_k \!
\left(\! f(x^{k+1}\!)  \!+\! \frac{1}{2L_f} \|g(y^k)\|^2  \!+\!
\langle g(y^k), x \!-\! y^k \rangle \!+\! \frac{\kappa_f}{2} \|x
\!-\! y^k \|^2 \!\right)\!, \nonumber
\end{align}
where $x^0 = y^0$ and ${x}^{k+1}=\left[y^k -  \frac{1}{L_f} \nabla f(y^k)\right]_X$, satisfy the following property:
\begin{align}
\label{prop1_fg}
\phi_k(y^*) \leq (1 - \lambda_k) f^* + \lambda_k \phi_0(y^*) \quad \forall k \geq 0.
\end{align}
\end{lemma}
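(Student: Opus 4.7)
The plan is to proceed by induction on $k$, in the standard style of Nesterov's estimating sequences, with the preliminary inequality \eqref{prop_gmap1} playing the role of the one nontrivial ingredient.

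\textbf{Base case.} For $k=0$ we have $\lambda_0 = 1$, so the right-hand side of \eqref{prop1_fg} reduces to $\phi_0(y^*)$, and the claim is the trivial identity $\phi_0(y^*) \leq \phi_0(y^*)$.

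\textbf{Inductive step.} Assuming the bound at stage $k$, evaluate the recurrence defining $\phi_{k+1}$ at the point $x = y^*$ to get
\[
\phi_{k+1}(y^*) = (1-\alpha_k)\,\phi_k(y^*) + \alpha_k\, T_k,
\]
where $T_k \;=\; f(x^{k+1}) + \tfrac{1}{2L_f}\|g(y^k)\|^2 + \langle g(y^k),\, y^* - y^k\rangle + \tfrac{\kappa_f}{2}\|y^* - y^k\|^2$. The key step is to observe that $T_k \leq f^*$. Indeed, by hypothesis $\bar{y}^k = y^*$, and the algorithm defines $x^{k+1} = [y^k - \tfrac{1}{L_f}\nabla f(y^k)]_X = (y^k)^+$ in the notation of the preceding discussion; thus instantiating \eqref{prop_gmap1} at $x = y^k$ gives exactly $f^* \geq T_k$.

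\textbf{Closing the recursion.} Combining this with the display above and invoking the inductive hypothesis,
\[
\phi_{k+1}(y^*) \;\leq\; (1-\alpha_k)\bigl[(1-\lambda_k) f^* + \lambda_k \phi_0(y^*)\bigr] + \alpha_k f^*,
\]
and collecting terms together with the definition $\lambda_{k+1} = (1-\alpha_k)\lambda_k$ yields
\[
\phi_{k+1}(y^*) \;\leq\; \bigl[1 - (1-\alpha_k)\lambda_k\bigr] f^* + (1-\alpha_k)\lambda_k\,\phi_0(y^*) \;=\; (1-\lambda_{k+1}) f^* + \lambda_{k+1}\,\phi_0(y^*),
\]
which is the desired inequality at stage $k+1$.

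\textbf{Main obstacle.} There is no genuine technical difficulty once \eqref{prop_gmap1} is in hand: the argument is a routine convex combination of the inductive hypothesis with an upper bound on $T_k$. The only point that requires care is the bookkeeping that identifies the bracketed quantity $T_k$ appearing inside the definition of $\phi_{k+1}$ with precisely the right-hand side of the gradient-mapping inequality \eqref{prop_gmap1} evaluated at $x = y^k$; this is where the standing assumption $\bar{y}^k = y^*$ is essential, since without it the term $\langle g(y^k), y^* - y^k\rangle$ would not be the correct inner product $\langle g(y^k), \bar{y}^k - y^k\rangle$ needed to invoke \eqref{prop_gmap1}.
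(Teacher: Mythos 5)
Your proof is correct and follows essentially the same route as the paper: induction on $k$, evaluating the recurrence at $y^*$, identifying the bracketed term with $q_{L_f,\kappa_f}(\bar{y}^k,y^k)$ via the assumption $\bar{y}^k=y^*$, bounding it by $f^*$ using \eqref{prop_gmap1}, and closing with the relation $\lambda_{k+1}=(1-\alpha_k)\lambda_k$. No meaningful differences from the paper's argument.
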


\begin{proof}
We prove this statement  by induction. Since $\lambda_0=1$, we observe that:
\[  \phi_0(y^*) = (1 - \lambda_0) f^* + \lambda_0 \phi_0(y^*).  \]
Assume that  the following inequality is valid:
\begin{align}
\label{induction_fg}
\phi_k(y^*) \leq (1 - \lambda_k) f^* + \lambda_k \phi_0(y^*),
\end{align}
then we have:
\begin{align*}
\phi_{k+1}(y^*) & = \phi_{k+1}(\bar{y}^k) = (1 - \alpha_k) \phi_k(\bar{y}^k) + \alpha_k q_{L_f,\kappa_f}(\bar{y}^k, y^k) \\
& \overset{\eqref{prop_gmap1}}{\leq} (1 - \alpha_k) \phi_k(\bar{y}^k) + \alpha_k f^* \\
& = [1 - (1 - \alpha_k)\lambda_k] f^* + (1 - \alpha_k) \left( \phi_k(\bar{y}^k)  - (1 - \lambda_k) f^* \right) \\
& \overset{ \bar{y}^k = y^* + \eqref{induction_fg} }{\leq} (1 -\lambda_{k+1}) f^* + \lambda_{k+1} \phi_0(y^*).
\end{align*}
which proves our statement. \qed
\end{proof}

\begin{lemma}
\label{lemma_fg2} Under the same assumptions as in Lemma
\ref{lemma_fg1} and assuming also that the sequence $\{x_k\}_{k \geq
0}$, defined as $x^0=y^0$  and  ${x}^{k+1}=\left[y^k - \frac{1}{L_f}
\nabla f(y^k)\right]_X$,  satisfies:
\begin{align}
\label{prop2_fg}
f(x^k) \leq \phi_k^* = \min_{x \in \rset^n} \phi_k(x) \qquad \forall k \geq 0,
\end{align}
then we obtain the following convergence:
\begin{align}
\label{prop3_fg}
f(x^k)- f^* \leq \lambda_k  \left( f(x^0) - f^* + \frac{\gamma_0}{2}\| y^* - y^0 \| \right).
\end{align}
\end{lemma}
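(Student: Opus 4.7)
The plan is to chain together the two inequalities already built into the estimating sequence $\{\phi_k\}_{k\geq 0}$: the upper bound on $\phi_k(y^*)$ proved in Lemma \ref{lemma_fg1}, and the lower bound $\phi_k^* \geq f(x^k)$ given as assumption \eqref{prop2_fg}. No new property of the iterates, of $f$, or of the gradient mapping is needed; the lemma is essentially the ``collapsing'' step of Nesterov's estimating sequence argument.

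Concretely, I would argue as follows. First, since $\phi_k^*$ is the global minimum of $\phi_k$ over $\rset^n$, we trivially have $\phi_k^* \leq \phi_k(y^*)$, and combined with the hypothesis \eqref{prop2_fg} this gives
\[
f(x^k) \;\leq\; \phi_k^* \;\leq\; \phi_k(y^*).
\]
Next, invoking Lemma \ref{lemma_fg1} applied at the point $y^*$ yields
\[
\phi_k(y^*) \;\leq\; (1-\lambda_k)f^* + \lambda_k\,\phi_0(y^*).
\]
Chaining these two and subtracting $f^*$ from both sides gives the clean recursion-free estimate
\[
f(x^k) - f^* \;\leq\; \lambda_k\bigl(\phi_0(y^*) - f^*\bigr).
\]

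To finish, I would evaluate $\phi_0(y^*)$ using its definition $\phi_0(x)=\phi_0^*+\tfrac{\gamma_0}{2}\|x-v^0\|^2$ with $\phi_0^*=f(y^0)$ and $v^0=y^0$, so that
\[
\phi_0(y^*) - f^* \;=\; f(y^0) - f^* + \tfrac{\gamma_0}{2}\|y^* - y^0\|^2 \;=\; f(x^0) - f^* + \tfrac{\gamma_0}{2}\|y^* - y^0\|^2,
\]
where the last equality uses the initialization $x^0=y^0$. Substituting back yields the claimed bound \eqref{prop3_fg}. There is no real obstacle here: the heavy lifting (the contractive construction of $\phi_k$ and the inequality $\phi_k(y^*)\leq (1-\lambda_k)f^*+\lambda_k\phi_0(y^*)$) is already carried out in Lemma \ref{lemma_fg1}, so the only care needed is to identify $y^*$ as the appropriate test point and to use that $\phi_k^*$ minimizes $\phi_k$ over all of $\rset^n$ (not just over $X$), which is precisely how the estimating functions are defined in \eqref{construction_fg}.
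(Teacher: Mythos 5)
Your proposal is correct and follows essentially the same argument as the paper: chain $f(x^k)\leq \phi_k^*\leq \phi_k(y^*)$ with the bound from Lemma \ref{lemma_fg1} and subtract $f^*$. Your explicit evaluation of $\phi_0(y^*)$ even makes clear that the last term should read $\frac{\gamma_0}{2}\|y^*-y^0\|^2$ (the missing square in \eqref{prop3_fg} is a typo in the statement), so nothing is amiss.
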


\begin{proof}
Indeed we have:
\begin{align*}
f(x^k) - f^* & \leq \phi_k^* - f^*= \min_{x \in \rset^n} \phi_k(x) - f^*  \leq \phi_k(y^*) - f^*\\
& \overset{\eqref{prop1_fg}}{\leq} (1 - \lambda_k) f^* + \lambda_k \phi_0(y^*) - f^* = \lambda_k \left(  \phi_0(y^*) - f^* \right),
\end{align*}
which proves the statement of the lemma. \qed
\end{proof}

\begin{theorem}
\label{th_fgm_W} Under the same assumptions as in Lemma
\ref{lemma_fg1}, the sequence $x^k$ generated by  fast gradient
method (FGM) with constant parameter $\beta_k = (\sqrt{L_f} -
\sqrt{\kappa_f})/(\sqrt{L_f} + \sqrt{\kappa_f})$  converges linearly
in terms of function values with the rate:
\begin{align}
\label{lin_conv_fgmW} f(x^{k}) - f^*  \leq \left( 1 - \sqrt{\mu_f} \right)^{k} \cdot 2 \left(f(x^0) - f^* \right), \quad \text{where} \quad \mu_f = \frac{\kappa_f}{L_f},
\end{align}
provided that all  iterates $y^k$ produce the same
projection\footnote{See Remark \ref{remark_all_bary} below for an
example satisfying this condition.} onto  optimal set~$X^*$.
\end{theorem}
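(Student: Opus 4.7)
The plan is to follow Nesterov's estimate-sequence scheme as organized in Lemmas \ref{lemma_fg1} and \ref{lemma_fg2}. The hypothesis that every iterate $y^k$ satisfies $[y^k]_{X^*} = y^*$ is exactly what Lemma \ref{lemma_fg1} requires, so what remains is to verify the second hypothesis of Lemma \ref{lemma_fg2}, namely $f(x^k) \leq \phi_k^*$ for all $k$, and then to bound the initial quantity $\phi_0(y^*) - f^*$.

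First I would establish by induction that each $\phi_k$ preserves the canonical quadratic form
\[ \phi_k(x) = \phi_k^* + \frac{\gamma_k}{2}\| x - v^k \|^2. \]
Expanding the recursion \eqref{construction_fg} yields
\[ \gamma_{k+1} = (1 - \alpha_k)\gamma_k + \alpha_k \kappa_f, \qquad v^{k+1} = \frac{(1-\alpha_k)\gamma_k v^k + \alpha_k \kappa_f y^k - \alpha_k g(y^k)}{\gamma_{k+1}}, \]
together with a corresponding update for $\phi_{k+1}^*$. I would then choose $\alpha_k$ constant so that $\gamma_k \equiv \gamma_0 = \kappa_f$; this forces $\alpha_k^2 L_f = \kappa_f$, hence $\alpha_k = \sqrt{\mu_f}$ and $\lambda_k = (1-\sqrt{\mu_f})^k$.

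Next I would carry out the induction $f(x^k) \leq \phi_k^*$. The base case follows from $x^0 = y^0$ and $\phi_0^* = f(y^0)$. For the inductive step I would substitute the canonical form into the expression for $\phi_{k+1}^*$ and use the gradient-mapping inequality \eqref{gmap} at $y^k$; with $\gamma_{k+1} = \kappa_f$ and $\alpha_k = \sqrt{\mu_f}$, the cross term $\langle g(y^k), v^k - y^k \rangle$ cancels exactly when $y^k$ is chosen as a specific convex combination of $x^k$ and $v^k$. Eliminating $v^k$ between the expressions for $v^{k+1}$ and $y^{k+1}$ produces the momentum recursion
\[ y^{k+1} = x^{k+1} + \beta_k (x^{k+1} - x^k), \qquad \beta_k = \frac{\sqrt{L_f} - \sqrt{\kappa_f}}{\sqrt{L_f} + \sqrt{\kappa_f}}, \]
matching the (FGM) update, so $v^k$ need never be stored by the algorithm.

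Finally, Lemma \ref{lemma_fg2} gives $f(x^k) - f^* \leq \lambda_k(\phi_0(y^*) - f^*) = \lambda_k \bigl( f(y^0) - f^* + (\gamma_0/2)\| y^* - y^0 \|^2 \bigr)$. By Theorem \ref{th_qap_sog}, $q\mathcal{S}_{L_f,\kappa_f}^{}(X) \subseteq \mathcal{F}_{L_f,\kappa_f}^{}(X)$, so the quadratic functional growth \eqref{wsc} yields $\|y^0 - y^*\|^2 \leq (2/\kappa_f)(f(y^0) - f^*)$. Since $\gamma_0 = \kappa_f$ and $x^0 = y^0$, this gives $\phi_0(y^*) - f^* \leq 2(f(x^0) - f^*)$, and substituting $\lambda_k = (1-\sqrt{\mu_f})^k$ yields the claimed rate. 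The main obstacle is the inductive step $f(x^k) \leq \phi_k^*$: one must arrange the bookkeeping between $\phi_k^*$, $v^k$ and $y^k$ so that all cross terms collapse, which is precisely what pins down the value of $\beta_k$. The hypothesis that all $y^k$ share the same projection $y^*$ is essential, since Lemma \ref{lemma_fg1} applies the quasi-strong convex inequality \eqref{prop_gmap1} at the pair $(y^k, y^*)$ at every step, and if the projections varied one could not telescope $\phi_k(\bar{y}^k)$ against the quadratic tail of $\phi_k$.
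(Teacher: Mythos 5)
Your proposal is correct and follows essentially the same route as the paper: the estimate-sequence argument of Lemmas \ref{lemma_fg1} and \ref{lemma_fg2} with $\gamma_k \equiv \kappa_f$ and $\alpha_k = \sqrt{\mu_f}$, the induction $f(x^k) \leq \phi_k^*$ via the gradient-mapping inequality \eqref{gmap} with $y^k$ chosen as the convex combination of $x^k$ and $v^k$ that kills the cross term, elimination of $v^k$ to recover the momentum coefficient $\beta_k$, and the bound $\phi_0(y^*) - f^* \leq 2(f(x^0)-f^*)$ from the quadratic functional growth \eqref{wsc}. The only minor slip is motivational: $\gamma_k \equiv \kappa_f$ holds automatically from $\gamma_0 = \kappa_f$ and the recursion, and what actually pins down $\alpha_k = \sqrt{\mu_f}$ is the vanishing of the $\|g(y^k)\|^2$ coefficient (i.e., $L_f\alpha_k^2 = \gamma_{k+1}$), which you invoke correctly in the inductive step anyway.
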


\begin{proof}
Let us consider $x^0 = y^0 = v^0 \in X$. Further,   for the sequence
of functions $\phi_k(x)$ as defined in \eqref{construction_fg}  take
$\alpha_k = \sqrt{\mu_f} \in (0,\; 1)$ for all $k\geq 0$ and denote
$\alpha = \sqrt{\mu_f}$. First, we need to show that the method
(FGM) defined  above generates a sequence $x^k$ satisfying $\phi_k^*
\geq f(x^k)$. Assuming that $\phi_k(x)$ has the following two
properties:
\[  \phi_k(x) = \phi_k^* + \frac{\kappa_f}{2}\| x - v^k \|^2 \quad \text{and} \quad \phi_k^* \geq f(x^k),   \]
where $\phi_k^* = \min_{x \in \rset^n} \phi_{k}(x)$  and $v^k= \arg \min_{x \in \rset^n} \phi_k(x)$, then we will show that $\phi_{k+1}(x)$  has similar properties. First of all, from the definition of $\phi_{k+1}(x)$, we get:
\[  \nabla^2 \phi_{k+1}(x) = \left((1 -\alpha) \kappa_f + \alpha \kappa_f \right) I_n = \kappa_f I_n, \]
i.e. $\phi_{k+1}(x)$ is also a quadratic function of the same form as $\phi_{k}(x)$:
\[ \phi_{k+1}(x) = \phi_{k+1}^* + \frac{\kappa_f}{2}\| x - v^{k+1} \|^2, \]
where  the expression of $v^{k+1}= \arg \min_{x \in \rset^n} \phi_{k+1}(x)$ is obtained from the equation $\nabla \phi_{k+1}(x) =0$, which leads to:
\[  v^{k+1} = \frac{1}{\kappa_f} \left (  (1-\alpha) \kappa_f v^k + \alpha \kappa_f y^k - \alpha g(y^k) \right).   \]
Evaluating $\phi_{k+1}$ in $y^k$ leads to:
\begin{align*}
\phi_{k+1}^*  + \frac{\kappa_f}{2}\| y^k - v^{k+1} \|^2 =&   (1 -\alpha) \left( \phi_k^* + \frac{\kappa_f}{2}\| y^k - v^k \|^2 \right) \\
& + \alpha  \left(\!  f(x^{k+1}) +  \frac{1}{2L_f} \|g(y^k)\|^2  \right).
\end{align*}
On the other hand, we have:
\[ v^{k+1} - y^k = \frac{1}{\kappa_f} \left(  \kappa_f(1-\alpha)(v^k - y^k) -
 \alpha g(y^k) \right). \]
If we substitute this expression above, we obtain:
\begin{align*}
\phi_{k+1}^* =&  (1 -\alpha) \phi_k^* + \alpha f(x^{k+1}) +  \left( \frac{\alpha}{2L_f} -  \frac{\alpha^2}{2\kappa_f} \right) \|g(y^k)\|^2 \\
& +   \alpha (1 - \alpha)\left(
\frac{\kappa_f}{2} \| y^k - v^k \|^2 + \langle g(y^k), v^k - y^k \rangle \right).
\end{align*}
Using the main property of the gradient mapping \eqref{gmap}, valid for functions with Lipschitz continuous gradient,  we have:
\[  \phi_k^* \geq f(x^k) \geq f(x^{k+1}) +  \langle g(y^k), x^k - y^k \rangle + \frac{1}{2L_f} \|g(y^k)\|^2. \]
Substituting this inequality in the previous one we get:
\[  \phi_{k+1}^* \geq f(x^{k+1}) + \left( \frac{1}{2L_f} -  \frac{\alpha^2}{2\kappa_f} \right) \|g(y^k)\|^2 + (1 -\alpha) \langle g(y^k), \alpha(v^k-y^k) + x^k - y^k \rangle. \]
Since $\alpha = \sqrt{\mu_f}$, then $\frac{1}{2L_f} -  \frac{\alpha^2}{2  \kappa_f} =0$. Moreover, we have the freedom to choose $y^k$, which is obtained from the condition $\alpha(v^k-y^k) + x^k - y^k =0$:
\[  y^k = \frac{1}{1 + \alpha} ( \alpha v^k + x^k).  \]
Then, we can conclude that $\phi_{k+1}^* \geq f(x^{k+1})$. Moreover, replacing the expression of $y^k$ in $v^{k+1}$ leads to the conclusion that we can eliminate the sequence $v^k$ since it can be expressed as: $v^{k+1} = x^k + \frac{1}{\alpha} (x^{k+1} - x^k)$. Then, we find that $y^{k+1}$ has the expression as in our scheme (FGM) above with $\beta_k= (\sqrt{L_f} - \sqrt{\kappa_f})/(\sqrt{L_f} + \sqrt{\kappa_f})$. Using, now Lemmas  \ref{lemma_fg1} and \ref{lemma_fg2} we get the convergence rate from \eqref{lin_conv_fgmW} (we also use that $\frac{\kappa_f}{2} \|x^{0} - \bar{x}^{0}\|^2 \leq f(x^0) - f^*$). \qed
\end{proof}

\begin{remark}
\label{remark_all_bary}
For unconstrained problem $\min_{x \in \rset^n} g(Ax)$, the gradient in some point $y$ is given by $A^T \nabla g(Ay) \in \text{Range}(A^T)$. Then,
the method (FGM) generates in this case a sequence $y^k$ of the form:
\[ y^k = y^0 + A^T z^k, \qquad z^k \in \rset^m \quad  \forall k \geq 0.   \]
Moreover, for this problem the optimal set $X^*=\{x: \; Ax = t^*\}$ and the projection onto this affine subspace is given by:
\[  [ \ \cdot \ ]_{X^*} = \left(I_n - A^T(A A^T)^{-1} A \right) (\cdot) + A^T(AA^T)^{-1} t^*.  \]
In conclusion, all vectors $y^k$ generated by algorithm (FGM) produce the same projection onto the optimal set $X^*$:
\[ \bar{y}^k = y^0  - A^T(AA^T)^{-1}Ay^0 + A^T(AA^T)^{-1} t^* \quad \forall k \geq 0,
\] i.e. the assumptions of Theorem  \ref{th_fgm_W}  are valid for this optimization problem. \qed
\end{remark}


\subsubsection{Linear convergence of restart (FGM)
 for $\mathcal{F}_{L_f,\kappa_f}^{}$.}
It is known  that for the convex optimization problem (P), whose
objective function  $f$ has Lipschitz continuous  gradient, and for the
choice:
\[ \beta_k = \frac{\theta_k -1 }{\theta_{k+1}}, \quad \text{with} \quad  \theta_1=1 \; \text{and} \;  \theta_{k+1} = \frac{1 + \sqrt{1 + 4\theta_k^2 }}{2},  \]
the algorithm {(FGM)} has the following convergence rate
\cite{Nes:04,DonCan:13}:
\begin{equation}
\label{bound_dual_optim_dfg}
 f({x}^{k}) - f^* \leq \frac{2 L_f \|x^0 - \bar{x}^0\|^2}{(k+1)^2} \qquad  \forall k >0.
\end{equation}
We will show next that  on the optimization problem (P) whose
objective function satisfies additionally the quadratic functional growth
\eqref{wsc}, i.e.  $f \in
\mathcal{F}_{L_f,\kappa_f}^{}$,  a  restarting version of
algorithm {(FGM)} with the above choice of $\beta_k$ has linear
convergence without the  assumption $\bar{y}^k = y^*$ for all $k \geq 0$.
Restarting variants of (FGM) have been also considered
in other contexts, see e.g. \cite{DonCan:13}.  By fixing a positive
constant $c \in (0, \; 1)$ and then combining
\eqref{bound_dual_optim_dfg} and \eqref{wsc}, we get:
\[ f(x^k) - f^*  \leq \frac{2 L_f }{(k+1)^2} \| x^0 - \bar{x}^0\|^2 \leq
 \frac{4 L_f }{\kappa_f (k+1)^2} (f(x^0) - f^*) \leq c (f(x^0) - f^*), \]
which leads to the following expression:
\[  c = \frac{4 L_f }{\kappa_f k^2}. \]
Then, for fixed $c$, the number of iterations $K_c$ that we need to
perform in order to obtain $f(x^{K_c}) - f^* \leq c (f(x^0) - f^*) $
is given by:
\[ K_c = \left  \lceil \sqrt{\frac{4 L_f }{c \kappa_f}} \; \right \rceil = \left  \lceil \sqrt{\frac{4}{c \mu_f}} \; \right \rceil.  \]
Therefore, after each $K_c$ steps of Algorithm {(FGM)} we
restart it obtaining the following scheme:
\begin{center}
\framebox{
\parbox{10cm}{
\begin{center}
\textbf{ Algorithm {\bf (R-FGM)} }
\end{center}
{Given $x^{0,0} = y^{0,0} = x^0 \in X$ and restart interval $K_c$. For $j
\geq 0$ do:}
\begin{enumerate}
\item Run Algorithm (FGM) for $K_c$ iterations to get ${x}^{K_c,j}$
\item Restart: $x^{0,j+1} = x^{K_c,j}$, \; $y^{0,j+1} = x^{K_c,j}$ \; and \;
$\theta_1=1$.
\end{enumerate}
}}
\end{center}
Then, after $p$ restarts of Algorithm {(R-FGM)} we obtain the linear
convergence:
\begin{align*}
f(x^{0,p}) - f^* & = f(x^{K_c,p-1}) - f^* \leq  \frac{2 L_f \|
x^{0,p-1} - \bar{x}^{0,p-1}\|^2}{(K_c+1)^2} \\
& \leq c (f(x^{0,p-1}) - f^*) \leq \cdots \leq c^p (f(x^{0,0}) -
f^*)  =  c^p (f(x^{0}) - f^*).
\end{align*}
Thus, total number of iterations is $k = p K_c$ and denote $x^k = x^{0,p}$. Then, we have:
\[ f(x^k) - f^* \leq \left(  c^{\frac{1}{K_c}} \right)^k  (f(x^{0}) - f^*).   \]
We want to optimize e.g. the number of iteration $K_c$:
\[ \min_{K_c} c^{\frac{1}{K_c}} \quad \Leftrightarrow  \quad \min_{K_c} \frac{1}{K_c} \log c
\quad \Leftrightarrow  \quad \min_{K_c} \frac{1}{K_c} \log \frac{4}{\mu_f K_c^2}, \]
which leads to
\[ K_c^* = \frac{2 e}{\sqrt{\mu_f}} \quad \text{and} \quad c = e^{-2}.  \]
In conclusion, we get the following convergence rate for (R-FGM) method:
\begin{align}
\label{conv_r-fg}
f(x^k) - f^* \leq  \left(  e^{-2 \frac{\sqrt{\mu_f}}{2e}} \right)^k  (f(x^{0}) - f^*) =
\left(  e^{-\frac{\sqrt{\mu_f}}{e}} \right)^k (f(x^{0}) - f^*),
\end{align}
and since $e^\alpha \approx 1+ \alpha$ as $\alpha \approx 0$, then for  $\frac{\sqrt{\mu_f}}{e} \approx 0$ we get:
\begin{align}
\label{conv_r-fg1}
f(x^k) - f^* \leq   \left(  e^{-\frac{\sqrt{\mu_f}}{e}} \right)^k (f(x^{0}) - f^*) \approx  \left(1 - \frac{\sqrt{\mu_f}}{e} \right)^k (f(x^{0}) - f^*).
\end{align}
Note that if the optimal value $f^*$ is known in advance, then we just need to
restart algorithm {(R-FGM)} at the iteration $\bar{K}_c \leq
K_c^*$ when the following condition holds:
\[   f(x^{\bar{K}_c,j}) - f^* \leq c (f(x^{0,j}) - f^*),  \]
which can be practically   verified. Using the second order growth
property  \eqref{wsc} we can also obtain easily linear convergence of the
generated sequence $x^{k}$ to some optimal point in $X^*$.


\subsection{Feasible descent methods (FDM)}
We now consider a more general descent  version of Algorithm (GM)
where the gradients are perturbed:
\begin{center}
\framebox{
\parbox{10cm}{
\begin{center}
\textbf{ Algorithm {\bf (FDM)} }
\end{center}
{Given $x^0  \in X$ and $\beta, L >0$  for $k \geq 0$ do:}
\begin{enumerate}
\item[] Compute ${x}^{k+1}=\left[ x^k - \alpha_k \nabla f(x^k) + e^k \right]_X$
\item[] such that
\item[]  $\|e^{k}\| \leq \beta \|x^{k+1} - x^k \|$ \; and \;  $f(x^{k+1}) \leq f(x^k) -
\frac{L}{2} \|x^{k+1} - x^k\|^2,$
\end{enumerate}
}}
\end{center}
where the stepsize $\alpha_k$ is chosen such that $\alpha_k \geq
\bar{L}_f^{-1} >0$ for all $k$. It has been showed in
\cite{LuoTse:92,WanLin:14} that algorithm (FDM) covers important
particular schemes: e.g.  proximal point minimization, random/cyclic
coordinate descent, extragradient descent and matrix splitting
methods are all feasible descent methods. Note that  linear
convergence of algorithm (FDM) under the error bound assumption
\eqref{geb}, i.e. $f \in \mathcal{E}_{L_f,\kappa_f}^{}$,  is
proved e.g.  in \cite{LuoTse:92,WanLin:14}. Hence, in the next
theorem we prove that the  feasible descent  method (FDM) converges
linearly in terms of function values on optimization problems (P)
whose objective functions belong to the class
$\mathcal{F}_{L_f,\kappa_f}^{}$.

\begin{theorem}
\label{the_fdm}
Let  the optimization  problem (P) have the objective function
belonging to the class $\mathcal{F}_{L_f,\kappa_f}^{}$. Then, the
sequence $x_k$ generated by the  feasible descent method (FDM) on
(P) converges linearly in terms of function values with the rate:
\begin{align}
\label{lin_conv_fdm_S} f(x^{k}) - f^*  \leq \left( \frac{1}{1 + \frac{L \kappa_f}{4(L_f + \bar{L}_f + \beta \bar{L}_f)^2}} \right)^k (f(x^0) - f^*).
\end{align}
\end{theorem}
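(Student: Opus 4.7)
The plan is to prove a per-iteration contraction of the form $f(x^{k+1}) - f^* \leq \rho\,(f(x^k) - f^*)$ with $\rho = 1/(1 + L\kappa_f/(4C^2))$ and $C = L_f + \bar{L}_f + \beta \bar{L}_f$, then iterate over $k$ to obtain \eqref{lin_conv_fdm_S}. One half of the argument is the descent inequality built into the definition of (FDM), $f(x^k) - f(x^{k+1}) \geq (L/2)\|x^{k+1}-x^k\|^2$; the other half, which is where the quadratic functional growth will enter, is an upper bound of the form $f(x^{k+1}) - f^* \leq (2C^2/\kappa_f)\|x^{k+1}-x^k\|^2$.

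The key intermediate estimate I would establish is
\begin{align*}
f(x^{k+1}) - f^* \;\leq\; C\, \|x^{k+1}-x^k\|\cdot\|x^{k+1}-\bar{x}^{k+1}\|.
\end{align*}
Starting from convexity, $f(x^{k+1}) - f^* \leq \langle \nabla f(x^{k+1}), x^{k+1} - \bar{x}^{k+1}\rangle$, I split the gradient as $\nabla f(x^{k+1}) = (\nabla f(x^{k+1}) - \nabla f(x^k)) + \nabla f(x^k)$. Lipschitz continuity of $\nabla f$ handles the first piece, producing the contribution $L_f\|x^{k+1}-x^k\|\cdot\|x^{k+1}-\bar{x}^{k+1}\|$. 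For the second piece I use the variational inequality characterizing the projection step, $\langle x^{k+1}-x^k+\alpha_k\nabla f(x^k)-e^k,\, z - x^{k+1}\rangle \geq 0$ for all $z \in X$, evaluated at $z = \bar{x}^{k+1}$, together with Cauchy--Schwarz, the perturbation bound $\|e^k\|\leq\beta\|x^{k+1}-x^k\|$, and $\alpha_k^{-1}\leq\bar{L}_f$. This produces the complementary $(1+\beta)\bar{L}_f\|x^{k+1}-x^k\|\cdot\|x^{k+1}-\bar{x}^{k+1}\|$, so the full constant is exactly $C = L_f + \bar{L}_f + \beta\bar{L}_f$.

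With the intermediate estimate in hand, I invoke the quadratic functional growth \eqref{wsc} to replace $\|x^{k+1}-\bar{x}^{k+1}\|$ by $\sqrt{(2/\kappa_f)(f(x^{k+1})-f^*)}$. Substituting into the estimate and squaring both sides cancels one factor of $f(x^{k+1}) - f^*$, yielding
\begin{align*}
f(x^{k+1}) - f^* \;\leq\; \frac{2C^2}{\kappa_f}\,\|x^{k+1}-x^k\|^2.
\end{align*}
Combining with the built-in descent of (FDM) and rearranging gives $(1 + L\kappa_f/(4C^2))(f(x^{k+1}) - f^*) \leq f(x^k) - f^*$, which is the desired contraction, and induction on $k$ delivers \eqref{lin_conv_fdm_S}.

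The main obstacle I anticipate is the careful bookkeeping in the intermediate estimate: extracting precisely the constant $C = L_f + \bar{L}_f + \beta\bar{L}_f$ from the variational inequality in the presence of the inexact term $e^k$ and a step size that is only controlled from below by $\alpha_k \geq \bar{L}_f^{-1}$. Once that piece is clean, the rest is a short chain of convexity, Cauchy--Schwarz, \eqref{wsc}, and the a priori descent assumption of the method.
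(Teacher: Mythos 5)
Your proposal is correct and follows essentially the same route as the paper's proof: the same convexity-plus-gradient-splitting bound $f(x^{k+1}) - f^* \leq (L_f + \bar{L}_f + \beta\bar{L}_f)\,\|x^{k+1}-x^k\|\,\|x^{k+1}-\bar{x}^{k+1}\|$ obtained from the projection variational inequality, Cauchy--Schwarz, $\|e^k\|\leq\beta\|x^{k+1}-x^k\|$ and $\alpha_k^{-1}\leq\bar{L}_f$, then the quadratic functional growth \eqref{wsc} to eliminate $\|x^{k+1}-\bar{x}^{k+1}\|$, the built-in descent of (FDM), and induction. No gaps.
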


\begin{proof}
The optimality conditions for computing $x^{k+1}$ are:
\begin{align}
\label{opt_pgm} \langle  x^{k+1} - x^k + \alpha_k \nabla f(x^k) - e^k, x -
x^{k+1} \rangle \geq 0 \quad \forall x \in X.
\end{align}
Then, using convexity of $f$  and Cauchy-Schwartz inequality, we get:
\begin{align*}
f(x^{k+1}) & -  f(\bar{x}^{k+1})  \leq \langle \nabla f(x^{k+1}),  x^{k+1} -  \bar{x}^{k+1} \rangle\\
& = \langle \nabla f(x^{k+1}) - \nabla f(x^{k}) + \nabla f(x^{k}),  x^{k+1} -  \bar{x}^{k+1} \rangle \\
&\overset{\eqref{lipg}+\eqref{opt_pgm}}{\leq} \!\!\! L_f \|x^{k+1} \!-  {x}^{k} \| \|x^{k+1} \!-
\bar{x}^{k+1}\| \!+\! \frac{1}{\alpha_k}   \langle x^{k+1} \!- x^{k} \!- e^k,  \bar{x}^{k+1} \!-  {x}^{k+1} \rangle \\
& \leq (L_f + \bar{L}_f) \|x^{k+1} -  {x}^{k} \| \|x^{k+1} -  \bar{x}^{k+1}\| + \bar{L}_f \|e^k\| \|x^{k+1} -  \bar{x}^{k+1}\| \\
& \leq (L_f + \bar{L}_f + \beta \bar{L}_f)
 \|x^{k+1} -  {x}^{k} \| \|x^{k+1} -  \bar{x}^{k+1}\|.
\end{align*}
Since $f \in \mathcal{F}_{L_f,\kappa_f}^{}$ then it satisfies  the
second order growth property, i.e. $ f(x^{k+1}) -  f(\bar{x}^{k+1})
\geq \frac{\kappa_f}{2} \|x^{k+1} -  \bar{x}^{k+1}\|^2$, and using
it  in the previous derivations we obtain:
\begin{align}
\label{pgm1}
f(x^{k+1}) -  f(\bar{x}^{k+1})  \leq  \frac{2 (L_f + \bar{L}_f + \beta \bar{L}_f)^2}{\kappa_f}  \|x^{k+1} -  x^{k} \|^2.
\end{align}
Combining \eqref{pgm1} with the descent property of the algorithm (FDM), i.e.  $
\|x^{k+1} - x^k\|^2 \leq \frac{2}{L} \left( f(x^{k}) - f(x^{k+1})
\right)$,  we get:
\[ f(x^{k+1}) -  f(\bar{x}^{k+1})  \leq   \frac{4 (L_f + \bar{L}_f + \beta \bar{L}_f)^2}{L  \kappa_f} \left( f(x^k)  - f(x^{k+1}) \right), \]
which leads to
\[  f(x^{k+1}) -  f(\bar{x}^{k+1})  \leq  \frac{1}{1 + \frac{L \kappa_f}{4(L_f + \bar{L}_f + \beta \bar{L}_f)^2}}  \left( f(x^k) - f(\bar{x}^k) \right). \]
Using an inductive argument we get the statement of the theorem. \qed
\end{proof}

\noindent Note that, once we have obtained linear convergence in terms of
function values for the algorithm (FDM), we can also obtain linear convergence of the
generated sequence $x^k$ to some optimal point in $X^*$ by using the
second order growth property  \eqref{wsc}.


\subsection{Discussions}

\noindent From previous sections we can conclude that for some classes of problems improved linear convergence  rates are obtained as compared to the existing results. For example,  in \cite{LuoTse:92,NecCli:14,WanLin:14} it has been proved that the optimization  problem \eqref{general_probl} whose objective function satisfies the conditions of Theorem \ref{the_gAx_cx} has on a compact set an error bound property  of the form \eqref{geb}. In this paper we  proved that this class of problems has the objective function satisfying the quadratic functional growth \eqref{wsc}. For the class of problems having an objective function satisfying  an error bound condition  the  feasible descent method (FDM) is shown to converge linearly in \cite{LiuWri:15,LuoTse:92,NecCli:14,WanLin:14,ZhaYin:13}. Note that for $\alpha_k=1/L_f, \beta=0$ and $L=L_f$ we recover from algorithm (FDM) the algorithm (GM). However, for these choices the linear convergence in \eqref{lin_conv_f}, given by $\frac{1}{1 +  \mu_f}$, is better than the one obtained in Theorem \ref{the_fdm}, given by  $\frac{1}{1 +  \mu_f/16}$.

\vspace{3pt}

\noindent Recently, in \cite{ZhaYin:13} the authors show that the class of convex
unconstrained problems $\min_{x \in \rset^n} g(Ax)$, with $g$ strongly convex function having Lipschitz continuous gradient,   satisfies a restricted strong convex
inequality, which is a particular version of our  more general  quadratic gradient growth inequality \eqref{gvar}. However, in this paper we proved  that the objective function of this particular class of optimization problems belongs to a more restricted functional class, namely  $q\mathcal{S}_{L_f,\kappa_f}^{}(X)$, i.e. it satisfies \eqref{wsc_basic}.  Thus, for this class of problems we provide better linear rates for gradient method and for fast gradient method as compared to \cite{ZhaYin:13}. More precisely, for the gradient method (GM) we derive convergence rate of order  $(1-\mu_f)/(1+\mu_f)$, while \cite{ZhaYin:13} proved convergence rate  of order $(1 - \mu_f)$. Moreover, from our best knowledge, this paper shows for the first time linear convergence of the usual fast gradient method (FGM) for this class of convex problems $\min_{x \in \rset^n} g(Ax)$, while  for example \cite{ZhaYin:13}
derives a worse rate of convergence and  for a restarting variant of the fast gradient method (R-FGM).



\section{Applications}
\label{sec_applications}
In this section we present several  applications having the objective function in one of the structured functional classes of Section \ref{sec_gAx}.

\subsection{Solution of linear systems}
It is well known that finding a solution of a symmetric linear
system $Qx+q=0$,  where $Q \succeq 0$ (notation for positive
semi-definite matrix), is equivalent to solving a convex quadratic
program (QP):
\[  \min_{x \in \rset^n} f(x)  \qquad \left( =\frac{1}{2} x^T Q x + q^T x \right).  \]
Let $Q = L_Q^T L_Q$ be  the Cholesky decomposition of $Q$.  For
simplicity,  let us assume that our symmetric linear system has a
solution, e.g. $x_s$, then $q$ is in the range of $Q$, i.e. $q = -Q
x_s = - L_Q^T L_Q x_s$. Therefore, if we define the strongly convex
function $g(z) = \frac{1}{2} \|z\|^2 -  (L_Q x_s)^T z$, having
$L_g=\sigma_g=1$, then our objective function is the composition of
$g$ with the linear map $L_Qx$:
\[  f(x) = \frac{1}{2} \|L_Qx\|^2 - (L_Q^T L_Q x_s)^T x = g(L_Qx). \]
Thus, our convex quadratic problem  is  in the form of unconstrained
structured  optimization problem \eqref{generalcls} and  from
Section \ref{sec_gAx} we conclude that the objective function of
this QP is in the class $q\mathcal{S}_{L_f,\kappa_f}^{}$ with:
\[  L_f = \lambda_{\text{max}}(Q) \; \text{and} \; \kappa_f \!=\! \sigma_{\text{min}}^2(L_Q) \!=\! \lambda_{\text{min}}(Q) \quad \Rightarrow \quad \mu_f \!=\! \frac{\lambda_{\text{min}}(Q)}{\lambda_{\text{max}}(Q)}  \!\equiv\!  \frac{1}{\text{cond}(Q)}, \]
where $\lambda_{\text{min}}(Q)$ denotes the smallest non-zero eigenvalue of $Q$ and $\lambda_{\text{max}}(Q)$ is the largest eigenvalue of $Q$. Since we assume that our symmetric linear system has a solution, i.e. $f^*=0$, from Theorem \ref{th_fgm_W} and Remark \ref{remark_all_bary} we conclude that when solving this convex  QP with the algorithm (FGM)  we get the convergence rate in terms of function values:
\[ f( x^k) \leq \left( 1 - \sqrt{\frac{1}{\text{cond}(Q)}} \right)^k  \cdot 2 f(x^0)   \]
or in terms of residual (gradient) or distance to the solution:
\begin{align*}
\| Qx^k + q\|^2 &= \|\nabla f(x^k)\|^2 \leq L_f^2 \| x^k - \bar{x}^k\|^2 \leq \frac{2L_f^2}{\kappa_f} \left( f(x^k) - f^* \right) \\
& \leq \left( 1 - \sqrt{\frac{1}{\text{cond}(Q)}} \right)^k  \cdot \lambda_{\text{max}}(Q) \cdot \text{cond}(Q) \left( \frac{1}{2}(x^0)^T Q x^0  + q^T x^0 \right).
\end{align*}
Therefore, the usual (FGM) algorithm without restart  attains an
$\epsilon$ optimal solution in a number of iterations of order
$\sqrt{\text{cond}(Q)} \log \frac{1}{\epsilon}$, i.e. the condition
number $\text{cond}(Q)$ of the matrix $Q$ is square rooted.  From
our knowledge, this is one of the first results showing linear
convergence depending on the square root of the condition number
for the  fast gradient method on solving a symmetric linear
system with positive semi-definite matrix $Q \succeq 0$.  Note that
the linear conjugate gradient method can also attain an $\epsilon$
approximate solution in much fewer than $n$ steps, i.e. the same
$\sqrt{\text{cond}(Q)} \log \frac{1}{\epsilon}$ iterations
\cite{Bub:15}. Usually, in the literature the condition number appears linearly in the convergence
rate of first order methods for solving linear systems with
 positive semi-definite matrices. For example,  the coordinate
descent method from  \cite{LevLew:10} requires $\sqrt{n} \cdot
\text{cond}(Q) \log \frac{1}{\epsilon}$ iterations for obtaining an
$\epsilon$ optimal solution.

\noindent Our results can be extended for solving general linear systems $Ax+b=0$, where $A \in \rset^{m \times n}$.  In this case we can formulate
the equivalent unconstrained optimization problem:
\[ \min_{x \in \rset^n} \| Ax+b \|^2 \]
which is  a particular case of \eqref{generalcls} and  from Section
\ref{sec_gAx} we can also conclude that the objective function of
this QP is in the class $q\mathcal{S}_{L_f,\kappa_f}$ with:
\[  L_f = \sigma_{\text{max}}^2(A) \;\; \text{and} \;\; \kappa_f \!=\! \sigma_{\text{min}}^2(A)
\quad \Rightarrow \quad \mu_f =
\frac{\sigma_{\text{min}}^2(A)}{\sigma_{\text{max}}^2(A)}, \] where
$\sigma_{\text{min}}(A)$ denotes the smallest non-zero singular
value of $A$ and $\sigma_{\text{max}}(A)$ is the largest singular
value of $A$.  In this case the usual (FGM) algorithm  attains and
$\epsilon$ optimal solution in a number of iterations of order
$\frac{\sigma_{\text{max}}(A)}{\sigma_{\text{min}}(A)} \log
\frac{1}{\epsilon}$.


\subsection{Dual of linearly constrained convex problems}
Let (P) be the dual formulation of a linearly constrained convex
problem:
\begin{align*}
& \min_{u} \tilde{g}(u) \\
& \text{s.t.}: \; c - A^T u  \in \mathcal{K} = \rset^{n_1} \times \rset^{n_2}_{+}.
\end{align*}
Then, the  dual of this optimization problem can be written in the form of structured problem \eqref{general_probl}, where $g$ is the convex conjugate of $\tilde{g}$. From duality theory we know that $g$ is strongly convex and with Lipschitz gradient, provided that $\tilde{g}$ is strongly convex and with Lipschitz gradient.


\subsection{Lasso problem}
The Lasso problem is defined as:
\[ \min_{x: Cx \leq d} \|Ax - b\|^2 + \lambda \|x\|_1. \]
Then, the Lasso problem is a particular case of the structured optimization  problem \eqref{general_probl}, provided that e.g. the feasible set of this problem is bounded (polytope).


\subsection{Linear  programming}
\label{sect_LP} Finding a primal-dual solution of a linear cone
program can also be  written in the form of a structured
optimization problem \eqref{generalcls}. Indeed, let $c \in \rset^N,
b \in \rset^m$ and $\mathcal{K} \subseteq \rset^{N}$ be a closed
convex cone, then we define the linear  cone programming:
\begin{align}
\label{coneprob_primal}
 \min\limits_{u} & \; \langle c, u \rangle \qquad   \text{s.t.} \;\;  Eu = b, \quad u \in \mathcal{K},
\end{align}
and its associated dual problem
\begin{align}
\label{coneprob_dual}
 \min\limits_{v, s} &\; \langle b, v\rangle \qquad
  \text{s.t.} \;\;   E^Tv + s = c,\quad s \in \mathcal{K}^*,
\end{align}
where $\mathcal{K}^*$ denotes the dual cone.  We assume that the
pair of cone programming
\eqref{coneprob_primal}--\eqref{coneprob_dual} have optimal
solutions and their associated duality gap is zero. Therefore, a
primal-dual solution of
\eqref{coneprob_primal}--\eqref{coneprob_dual} can be found by
solving the following convex feasibility problem, also called
homogeneous self-dual embedding:
\begin{equation}\label{optcond_cone}
\text{find} \; (u,v,s) \; \text{such that} \;\;
\begin{cases}
 E^Tv + s = c, \;\; Eu =b, \;\; \langle c,u\rangle = \langle b,v\rangle \\
u \in \mathcal{K}, \;\; s \in \mathcal{K}^*, \;\; v \in \rset^{m},
\end{cases}
\end{equation}
or, in a more compact formulation:
$$\text{find} \; x \; \text{such that} \;\; \begin{cases} Ax = d \; \\
x  \in {\mathcal{\textbf{K}}},
\end{cases}
 $$
where $x = \begin{bmatrix}
                       u \\ v \\ s
                    \end{bmatrix}, \;\;
A = \begin{bmatrix}
     0 &E^T &I_n \\
     E &0 &0 \\
     c^T &-b^T &0
    \end{bmatrix}, \;\;
d = \begin{bmatrix}
     c \\
     b \\
     0
    \end{bmatrix}$, \;\;
$\mathcal{\textbf{K}}= \mathcal{K}\times \rset^m \times
\mathcal{K}^*$. The authors in \cite{DonChu:16} proposed solving
conic optimization problems in homogeneous self-dual embedding form
using ADMM. In this paper we propose solving a linear program in the
homogeneous self-dual embedding form using the first order methods
presented above. A simple reformulation of this constrained linear
system as an optimization problem is:
\begin{align}
\min\limits_{x  \in \mathcal{\textbf{K}}} &\; \norm{Ax -d}^2.
\label{cone_opt}
\end{align}
Denote the dimension of the variable $x$ as $n = 2N+m$.  Let us note
that the  optimization problem \eqref{cone_opt} is a particular case
of \eqref{generalcls} with objective function of the form $f(x) = g(Ax)$, with
$g(\cdot) = \|\cdot - d\|^2$. Moreover, the conditions of Theorem
\ref{the_gAx} hold   provided that  $\mathcal{K} = \rset^N_+$.   We
conclude that we can always solve a linear program in linear time
using  the first order methods described in the present paper.


\subsection{Numerical simulations}
We test the performance of first order algorithms described above on
randomly generated Linear Programs \eqref{coneprob_primal} with
$\mathcal{K} = \rset^N_+$. We assume Linear Programs with finite
optimal values. Then, we can  reformulate \eqref{coneprob_primal} as
the quadratic convex problem \eqref{cone_opt} for which $f^*=0$. We
compare the following algorithms for problem \eqref{cone_opt} (the results are given in Figures 1 and 2):
\begin{itemize}
\item[1.] Projected gradient algorithm  with fixed stepsize (GM):  $\alpha_k =
\|A\|^{-2}$ (in this case the Lipschitz constant is $L_f=\|A\|^2$).

\item[2.] Fast gradient algorithm with restart (R-FGM): where $c =
10^{-1}$ and we restart  when $\|Ax^{K_c^*,j} - d\| \leq c \|
Ax^{0,j} - d \|$.

\item[3.] Exact cyclic coordinate descent algorithm (Cyclic CD):
\[ x_i^{k+1} = \arg \min_{x_i \in {\mathcal{\textbf{K}}}_i} \|A x_i(k) - d\|^2,  \]
where ${\mathcal{\textbf{K}}}_i$ is either $\rset_+$ or $\rset$ and
$x_i(k) =[x_1^{k+1} \cdots x_{i-1}^{k+1} \; x_i \; x_{i+1}^k \cdots
x_n^k]$. It has been proved in \cite{LuoTse:92} that this algorithm
is a particular version of the feasible descent method (FDM)
with parameters:
\[ \alpha_k = 1, \qquad \beta = 1+ L_f \sqrt{n}, \qquad L = \min_i \|A_i\|^2,  \]
provided that all the columns  of $A$ are nonzeros, i.e. $\|A_i\| >
0$ for all $i=1:n$.
\end{itemize}

\noindent The comparisons use  Linear Programs  whose data $(E, b,
c)$ are  generated randomly from the standard Gaussian distribution
with full or sparse matrix $E$. Matrix $E$ has $100$ rows and $150$
columns in the full case and  $900$ rows and $1000$ columns in the
sparse case. 
Figures 1 and 2 depict the error $\|A x^k -d\|$. We can observe that
the gradient method has a slower convergence  than the  fast gradient method with restart,
but both have a linear behaviour as we can see from the comparison with the  theoretical
sublinear estimates, see Figure 1. Moreover, the fast gradient method with
restart is performing much faster than the gradient or cyclic
coordinate descent methods on sparse and full Linear Programs, see Figure 2.

\begin{figure}
\label{fig_lcth} \caption{Linear convergence of algorithms \textbf{R-FGM} (left) and
\textbf{GM} (right): log scale of the error $\|Ax^k-d\|^2$. We also compare
with the  theoretical sublinear estimates (dot lines) for the
convergence rate  of algorithms \textbf{FGM} ($\mathcal{O} (L_fR^2_f/k^2)$) and
\textbf{GM} ($\mathcal{O} (L_fR^2_f/k)$) for smooth convex problems.
The plots clearly show  our theoretical findings, i.e. linear convergence.}
\includegraphics[height=5.5cm,width=6.3cm]{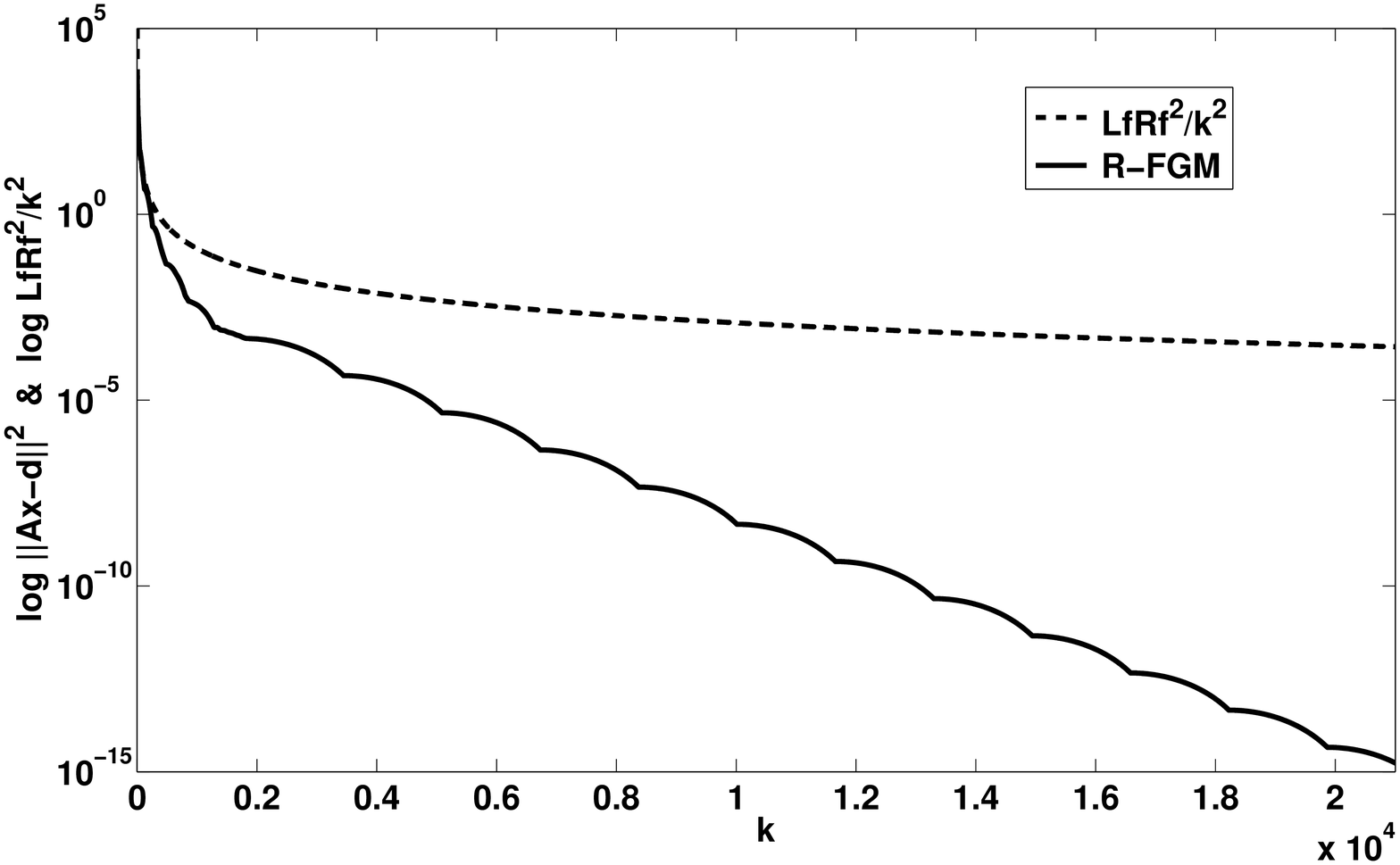}
\hspace*{-0.5cm}
\includegraphics[height=5.5cm,width=6.3cm]{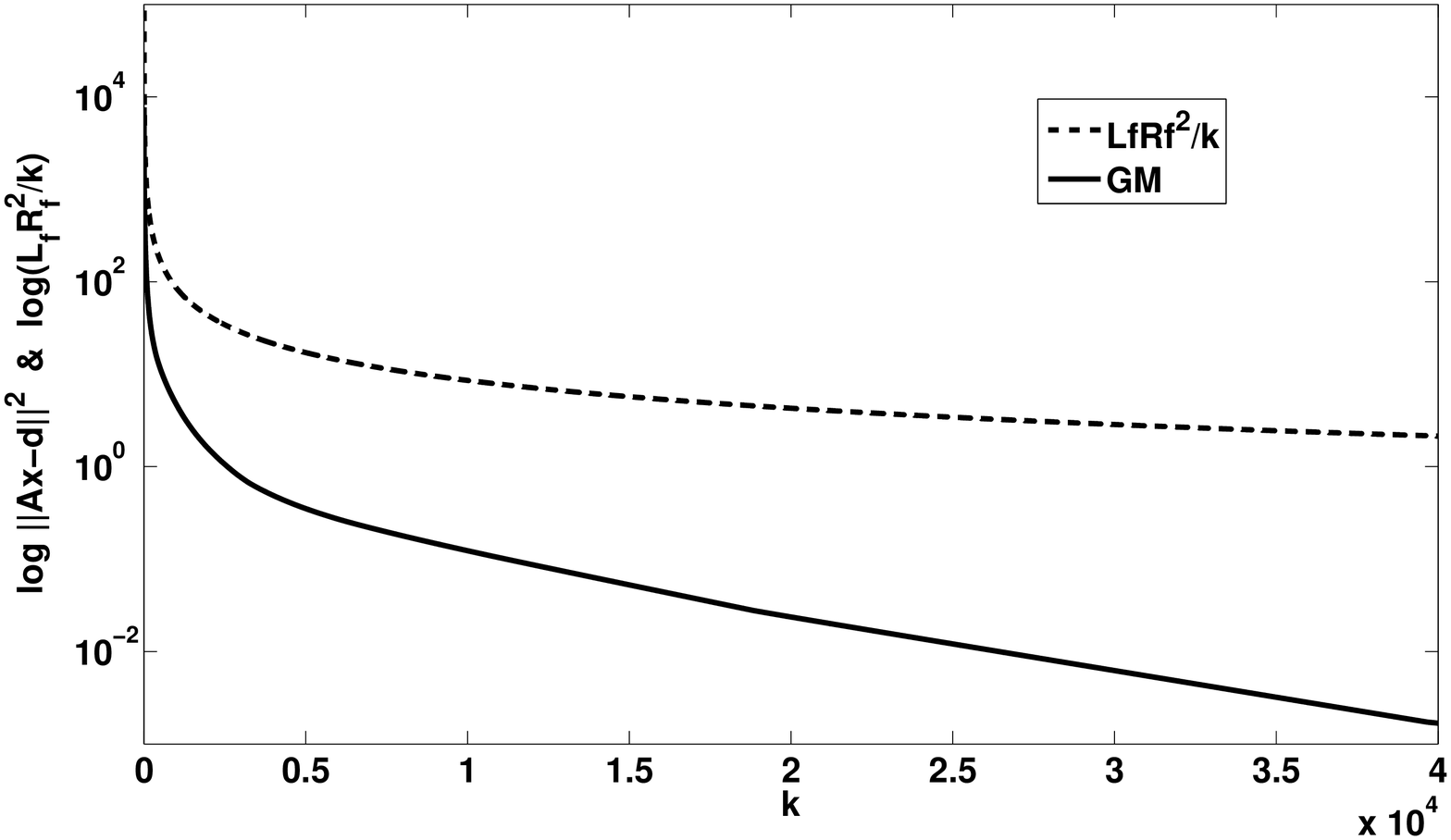}
\end{figure}

\begin{figure}
\label{fig_lcth} \caption{The behavior  of algorithms  \textbf{GM},
\textbf{R-FGM} and \textbf{Cyclic CD}: log scale of the error
$\|Ax^k-d\|$ along iterations $k$ (left - full $E$, right - sparse
$E$).}
\includegraphics[height=5.5cm,width=6.3cm]{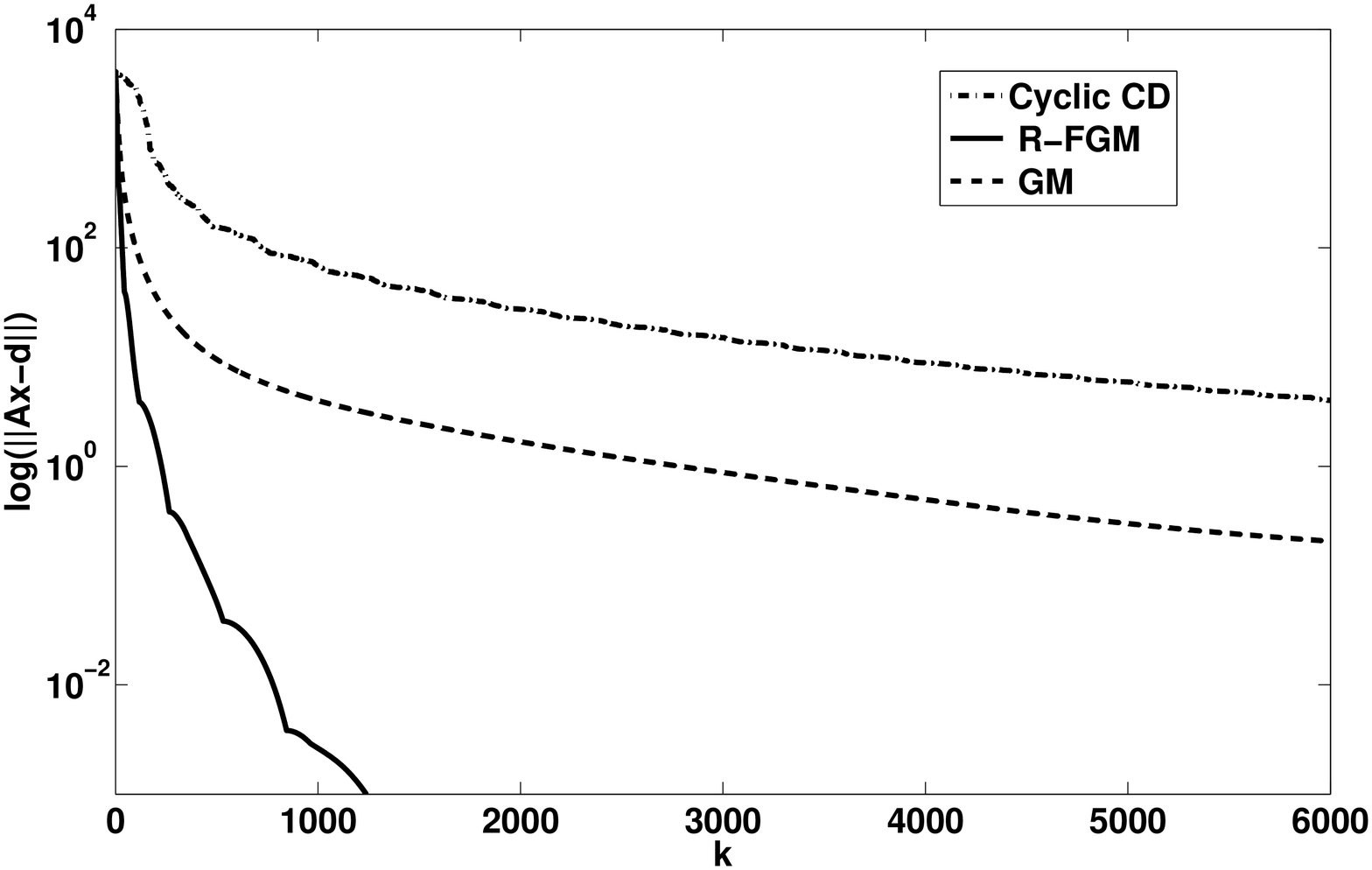}
\hspace*{-0.5cm}
\includegraphics[height=5.5cm,width=6.3cm]{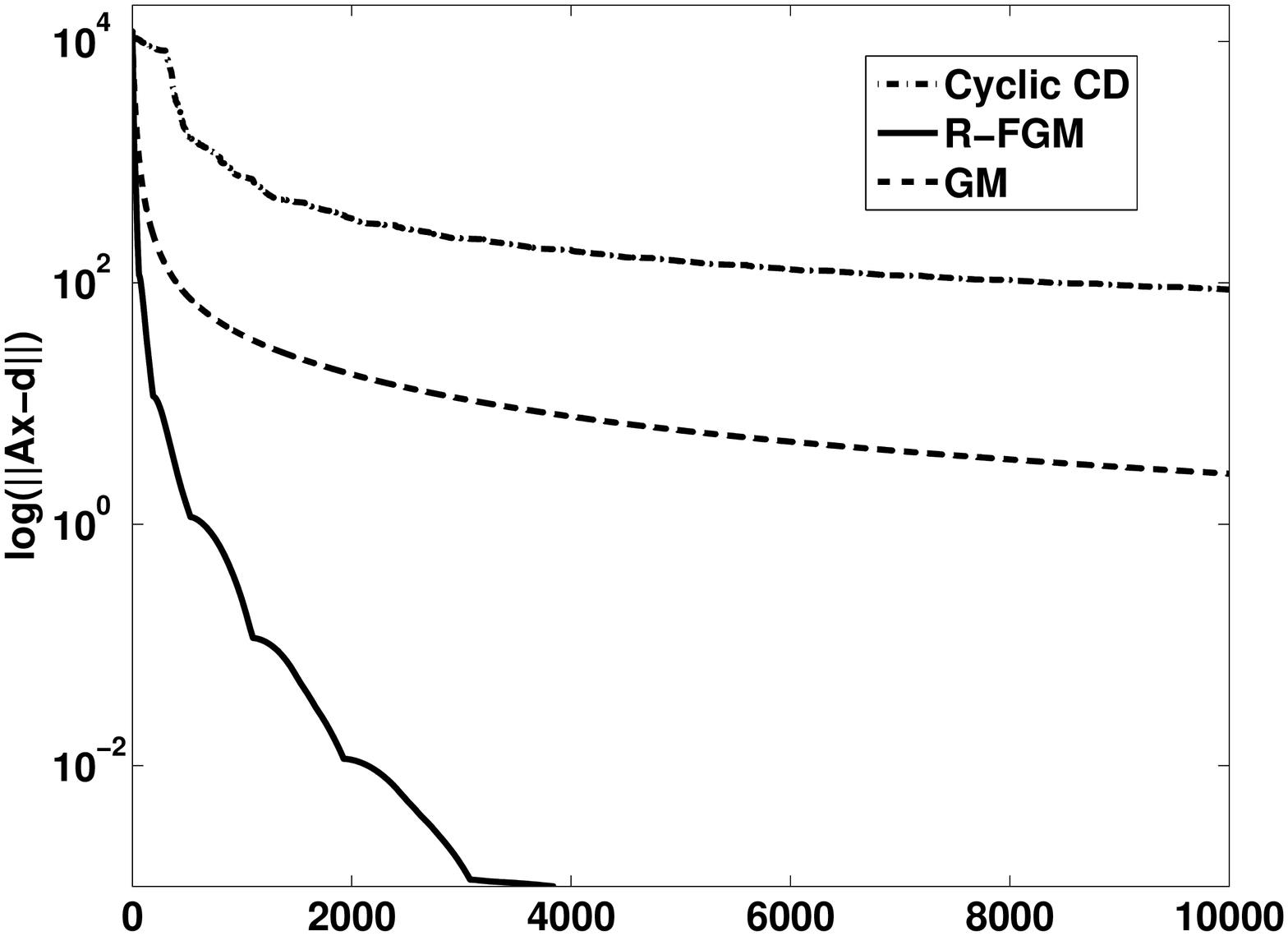}
\end{figure}

\end{document}